\numberwithin{equation}{section}
\theoremstyle{plain}
\newtheorem{theorem}{Theorem}[section]
\newtheorem{lemma}[theorem]{Lemma}
\theoremstyle{definition}
\theoremstyle{remark}
\begin{document}
%

\newcommand{\MgNekp}{\mathcal{M}_{g,N+1}^{(k,p)}} 
\newcommand{\M}{\mathcal{M}_{g,N+1}^{(1)}}
\newcommand{\Teich}{\mathcal{T}_{g,N+1}^{(1)}}
\newcommand{\T}{\mathrm{T}}
\newcommand{\corr}{\bf}
\newcommand{\vac}{|0\rangle}
\newcommand{\Ga}{\Gamma}
\newcommand{\new}{\bf}
\newcommand{\define}{\def}
\newcommand{\redefine}{\def}
\newcommand{\Cal}[1]{\mathcal{#1}}
\renewcommand{\frak}[1]{\mathfrak{{#1}}}
\newcommand{\refE}[1]{(\ref{E:#1})}
\newcommand{\refS}[1]{Section~\ref{S:#1}}
\newcommand{\refSS}[1]{Section~\ref{SS:#1}}
\newcommand{\refT}[1]{Theorem~\ref{T:#1}}
\newcommand{\refO}[1]{Observation~\ref{O:#1}}
\newcommand{\refP}[1]{Proposition~\ref{P:#1}}
\newcommand{\refD}[1]{Definition~\ref{D:#1}}
\newcommand{\refC}[1]{Corollary~\ref{C:#1}}
\newcommand{\refL}[1]{Lemma~\ref{L:#1}}
\newcommand{\R}{\ensuremath{\mathbb{R}}}
\newcommand{\C}{\ensuremath{\mathbb{C}}}
\newcommand{\N}{\ensuremath{\mathbb{N}}}
\newcommand{\Q}{\ensuremath{\mathbb{Q}}}
\renewcommand{\P}{\ensuremath{\mathcal{P}}}
\newcommand{\Z}{\ensuremath{\mathbb{Z}}}
\newcommand{\kv}{{k^{\vee}}}
\renewcommand{\l}{\lambda}
\newcommand{\gb}{\overline{\mathfrak{g}}}
\newcommand{\hb}{\overline{\mathfrak{h}}}
\newcommand{\g}{\mathfrak{g}}
\newcommand{\h}{\mathfrak{h}}
\newcommand{\gh}{\widehat{\mathfrak{g}}}
\newcommand{\ghN}{\widehat{\mathfrak{g}_{(N)}}}
\newcommand{\gbN}{\overline{\mathfrak{g}_{(N)}}}
\newcommand{\tr}{\mathrm{tr}}
\newcommand{\sln}{\mathfrak{sl}}
\newcommand{\sn}{\mathfrak{s}}
\newcommand{\so}{\mathfrak{so}}
\newcommand{\spn}{\mathfrak{sp}}
\newcommand{\tsp}{\mathfrak{tsp}(2n)}
\newcommand{\gl}{\mathfrak{gl}}
\newcommand{\slnb}{{\overline{\mathfrak{sl}}}}
\newcommand{\snb}{{\overline{\mathfrak{s}}}}
\newcommand{\sob}{{\overline{\mathfrak{so}}}}
\newcommand{\spnb}{{\overline{\mathfrak{sp}}}}
\newcommand{\glb}{{\overline{\mathfrak{gl}}}}
\newcommand{\Hwft}{\mathcal{H}_{F,\tau}}
\newcommand{\Hwftm}{\mathcal{H}_{F,\tau}^{(m)}}

\newcommand{\car}{{\mathfrak{h}}}    
\newcommand{\bor}{{\mathfrak{b}}}    
\newcommand{\nil}{{\mathfrak{n}}}    
\newcommand{\vp}{{\varphi}}
\newcommand{\bh}{\widehat{\mathfrak{b}}}  
\newcommand{\bb}{\overline{\mathfrak{b}}}  
\newcommand{\Vh}{\widehat{\mathcal V}}
\newcommand{\KZ}{Kniz\-hnik-Zamo\-lod\-chi\-kov}
\newcommand{\TUY}{Tsuchia, Ueno  and Yamada}
\newcommand{\KN} {Kri\-che\-ver-Novi\-kov}
\newcommand{\pN}{\ensuremath{(P_1,P_2,\ldots,P_N)}}
\newcommand{\xN}{\ensuremath{(\xi_1,\xi_2,\ldots,\xi_N)}}
\newcommand{\lN}{\ensuremath{(\lambda_1,\lambda_2,\ldots,\lambda_N)}}
\newcommand{\iN}{\ensuremath{1,\ldots, N}}
\newcommand{\iNf}{\ensuremath{1,\ldots, N,\infty}}

\newcommand{\tb}{\tilde \beta}
\newcommand{\tk}{\tilde \kappa}
\newcommand{\ka}{\kappa}
\renewcommand{\k}{\varkappa}

\newcommand{\Pif} {P_{\infty}}
\newcommand{\Pinf} {P_{\infty}}
\newcommand{\PN}{\ensuremath{\{P_1,P_2,\ldots,P_N\}}}
\newcommand{\PNi}{\ensuremath{\{P_1,P_2,\ldots,P_N,P_\infty\}}}
\newcommand{\Fln}[1][n]{F_{#1}^\lambda}
\newcommand{\tang}{\mathrm{T}}
\newcommand{\Kl}[1][\lambda]{\can^{#1}}
\newcommand{\A}{\mathcal{A}}
\newcommand{\U}{\mathcal{U}}
\newcommand{\V}{\mathcal{V}}
\renewcommand{\O}{\mathcal{O}}
\newcommand{\Ae}{\widehat{\mathcal{A}}}
\newcommand{\Ah}{\widehat{\mathcal{A}}}
\newcommand{\La}{\mathcal{L}}
\newcommand{\Le}{\widehat{\mathcal{L}}}
\newcommand{\Lh}{\widehat{\mathcal{L}}}
\newcommand{\eh}{\widehat{e}}
\newcommand{\Da}{\mathcal{D}}
\newcommand{\kndual}[2]{\langle #1,#2\rangle}
\newcommand{\cins}{\frac 1{2\pi\mathrm{i}}\int_{C_S}}
\newcommand{\cinsl}{\frac 1{24\pi\mathrm{i}}\int_{C_S}}
\newcommand{\cinc}[1]{\frac 1{2\pi\mathrm{i}}\int_{#1}}
\newcommand{\cintl}[1]{\frac 1{24\pi\mathrm{i}}\int_{#1 }}
\newcommand{\w}{\omega}
\newcommand{\ord}{\operatorname{ord}}
\newcommand{\res}{\operatorname{res}}
\newcommand{\nord}[1]{:\mkern-5mu{#1}\mkern-5mu:}
\newcommand{\Fn}[1][\lambda]{\mathcal{F}^{#1}}
\newcommand{\Fl}[1][\lambda]{\mathcal{F}^{#1}}
\renewcommand{\Re}{\mathrm{Re}}

\newcommand{\ha}{H^\alpha}

\define\ldot{\hskip 1pt.\hskip 1pt}
\define\ifft{\qquad\text{if and only if}\qquad}
\define\a{\alpha}
\redefine\d{\delta}
\define\w{\omega}
\define\ep{\epsilon}
\redefine\b{\beta} \redefine\t{\tau} \redefine\i{{\,\mathrm{i}}\,}
\define\ga{\gamma}
\define\cint #1{\frac 1{2\pi\i}\int_{C_{#1}}}
\define\cintta{\frac 1{2\pi\i}\int_{C_{\tau}}}
\define\cintt{\frac 1{2\pi\i}\oint_{C}}
\define\cinttp{\frac 1{2\pi\i}\int_{C_{\tau'}}}
\define\cinto{\frac 1{2\pi\i}\int_{C_{0}}}
\define\cinttt{\frac 1{24\pi\i}\int_C}
\define\cintd{\frac 1{(2\pi \i)^2}\iint\limits_{C_{\tau}\,C_{\tau'}}}
\define\cintdr{\frac 1{(2\pi \i)^3}\int_{C_{\tau}}\int_{C_{\tau'}}
\int_{C_{\tau''}}}
\define\im{\operatorname{Im}}
\define\re{\operatorname{Re}}
\define\res{\operatorname{res}}
\redefine\deg{\operatornamewithlimits{deg}}
\define\ord{\operatorname{ord}}
\define\rank{\operatorname{rank}}
\define\fpz{\frac {d }{dz}}
\define\dzl{\,{dz}^\l}
\define\pfz#1{\frac {d#1}{dz}}

\define\K{\Cal K}
\define\U{\Cal U}
\redefine\O{\Cal O}
\define\He{\text{\rm H}^1}
\redefine\H{{\mathrm{H}}}
\define\Ho{\text{\rm H}^0}
\define\A{\Cal A}
\define\Do{\Cal D^{1}}
\define\Dh{\widehat{\mathcal{D}}^{1}}
\redefine\L{\Cal L}
\newcommand{\ND}{\ensuremath{\mathcal{N}^D}}
\redefine\D{\Cal D^{1}}
\define\KN {Kri\-che\-ver-Novi\-kov}
\define\Pif {{P_{\infty}}}
\define\Uif {{U_{\infty}}}
\define\Uifs {{U_{\infty}^*}}
\define\KM {Kac-Moody}
\define\Fln{\Cal F^\lambda_n}
\define\gb{\overline{\mathfrak{ g}}}
\define\G{\overline{\mathfrak{ g}}}
\define\Gb{\overline{\mathfrak{ g}}}
\redefine\g{\mathfrak{ g}}
\define\Gh{\widehat{\mathfrak{ g}}}
\define\gh{\widehat{\mathfrak{ g}}}
\define\Ah{\widehat{\Cal A}}
\define\Lh{\widehat{\Cal L}}
\define\Ugh{\Cal U(\Gh)}
\define\Xh{\hat X}
\define\Tld{...}
\define\iN{i=1,\ldots,N}
\define\iNi{i=1,\ldots,N,\infty}
\define\pN{p=1,\ldots,N}
\define\pNi{p=1,\ldots,N,\infty}
\define\de{\delta}

\define\kndual#1#2{\langle #1,#2\rangle}
\define \nord #1{:\mkern-5mu{#1}\mkern-5mu:}
\define \sinf{{\widehat{\sigma}}_\infty}
\define\Wt{\widetilde{W}}
\define\St{\widetilde{S}}
\newcommand{\SigmaT}{\widetilde{\Sigma}}
\newcommand{\hT}{\widetilde{\frak h}}
\define\Wn{W^{(1)}}
\define\Wtn{\widetilde{W}^{(1)}}
\define\btn{\tilde b^{(1)}}
\define\bt{\tilde b}
\define\bn{b^{(1)}}
%
\define\eps{\varepsilon}    
\define\doint{({\frac 1{2\pi\i}})^2\oint\limits _{C_0}
       \oint\limits _{C_0}}                            
\define\noint{ {\frac 1{2\pi\i}} \oint}   
\define \fh{{\frak h}}     
\define \fg{{\frak g}}     
\define \GKN{{\Cal G}}   
\define \gaff{{\hat\frak g}}   
\define\V{\Cal V}
\define \ms{{\Cal M}_{g,N}} 
\define \mse{{\Cal M}_{g,N+1}} 
\define \tOmega{\Tilde\Omega}
\define \tw{\Tilde\omega}
\define \hw{\hat\omega}
\define \s{\sigma}
\define \car{{\frak h}}    
\define \bor{{\frak b}}    
\define \nil{{\frak n}}    
\define \vp{{\varphi}}
\define\bh{\widehat{\frak b}}  
\define\bb{\overline{\frak b}}  
\define\Vh{\widehat V}
\define\KZ{Knizhnik-Zamolodchikov}
\define\ai{{\alpha(i)}}
\define\ak{{\alpha(k)}}
\define\aj{{\alpha(j)}}
\newcommand{\laxgl}{\overline{\mathfrak{gl}}}
\newcommand{\laxsl}{\overline{\mathfrak{sl}}}
\newcommand{\laxso}{\overline{\mathfrak{so}}}
\newcommand{\laxsp}{\overline{\mathfrak{sp}}}
\newcommand{\laxs}{\overline{\mathfrak{s}}}
\newcommand{\laxg}{\overline{\frak g}}
\newcommand{\bgl}{\laxgl(n)}
\newcommand{\tX}{\widetilde{X}}
\newcommand{\tY}{\widetilde{Y}}
\newcommand{\tZ}{\widetilde{Z}}

\vspace*{-1cm}
%
%
%
\vspace*{2cm}

\title[Lax operator algebras of type $G_2$]
{Lax operator algebras of type $G_2$}
\author[O.K. Sheinman]{Oleg K. Sheinman}


\begin{abstract}
Lax operator algebras for the root system $G_2$, and arbitrary finite genus Riemann surfaces and Tyurin data on them are constructed.
\end{abstract} \subjclass{17B66,
17B67, 14H10, 14H15, 14H55,  30F30, 81R10, 81T40} \keywords{
Current algebra, Lax operator algebra, exceptional Lie algebra $G_2$}
\maketitle

\tableofcontents

\section{Introduction}\label{S:intro}
Lax operator algebras are introduced in \cite{KSlax} and later investigated in different aspects in \cite{SSlax,ShN70AMS,Sh_lopa,Sh0910_Hamilt,Lax_prequant,Sch_Laxmulti}. For the moment, a most complete presentation of the theory of Lax operator algebras is given in \cite{Sh_DGr}.

Lax operator algebras constitute a certain class of almost graded current algebras on Riemann surfaces with marked points. As such they generalize loop and affine Krichever--Novikov algebras.

From the physical point of view Lax operator algebras can be characterized as the algebras of the theory of integrable systems while Krichever--Novikov algebras are the algebras of gauge and conformal symmetries of the 2-dimensional conformal field theory. There exists quite interesting interaction between Lax operator-\ and Krichever--Novikov algebras in this respect \cite{Lax_prequant,Sh_DGr}.

So far Lax operator algebras have been constructed only for classical simple Lie algebras (over $\C$) as the range of values of currents. The question whether there exist Lax operator algebras for exceptional simple Lie algebras was open. In this paper we construct such algebras for the exceptional Lie algebra $G_2$.

In \refS{G2} we recall the definition of the exceptional simple Lie algebra $G_2$ and give some relation heavily used below.

In \refS{LaxG2} we give the definition of Lax operator algebras corresponding to $G_2$, formulate the result on their closeness with respect to the Lie bracket (\refT{Liealg}), and outline its proof. The proof itself is given in the Appendix \ref{S:close}.

In \refS{almgrad} we define an almost graded structure on Lax operator algebras corresponding to $G_2$, formulate and prove a main result on them --- the \refT{almgrad}.

In \refS{centext} for every Lax operator algebra corresponding to $G_2$ we construct a certain 2-cocycle, and the corresponding central extension, and show that it is basically unique (\refT{central}).

We conclude with certain remarks given in \refS{conclude}.

All results of sections \ref{S:LaxG2} -- \ref{S:centext} are known for Lax operator algebras over classical simple Lie algebras,  but not over exceptional Lie algebras. In the case of $\g=G_2$ the proofs of those results require much more formal calculations. This is the reason of shifting almost all proofs into Appendix. We would like stress that though a similarity between the results and their proofs for different simple Lie algebras is evident, no concept-based proof exists. It is even not clear why such proof should exist (see more remarks in \refS{conclude}). It is a challenge to clarify this question.

I am thankful to N.Vavilov and M.Schlichenmaier for their interest having given me an additional motivation for thinking of the Lax operator algebras in the exceptional cases.
\section{Lie algebra $G_2$ in 7-dimensional representation}\label{S:G2}

According to \cite{Zhelob} $\g=G_2$ can be represented as the the Lie algebra of $7\times 7$-matrices of the form
\begin{equation}\label{E:G2matr}
\tilde A=\begin{pmatrix}
       0 & -\sqrt{2}a_2^t & -\sqrt{2}a_1^t \\
       \sqrt{2}a_1 & A & [a_2] \\
       \sqrt{2}a_2 & [a_1] & -A^t \\
       \end{pmatrix}
\end{equation}
where $a_1,a_2\in\C^3$ are interpreted as vector-columns, $a_1^t,a_2^t$ are the corresponding vector-rows, $A$ is a traceless $3\times 3$\,-matrix. For any $x\in\C^3$, $x^t=(x_1,x_2,x_3)$, by $[x]$ we denote the corresponding skew-symmetric $3\times 3$\,-matrix:
\[
  [x]=\begin{pmatrix}
       0 & x_3 & -x_2 \\
       -x_3 & 0 & x_1\\
       x_2 & -x_1 & 0 \\
       \end{pmatrix}.
\]
For any two vectors $x,y\in\C^3$ by $x\times y$ we denote their vector product. Then the following relations take place:
\begin{equation}\label{E:rels}
 \begin{split}
  &1^\circ\ [x]y=x\times y;\\
  &2^\circ\ [x][y]=yx^t-(x^ty)E;\\
  &3^\circ\ -[Ax]=A^t[x]+[x]A, \forall A\in \rm{Matr}(3\times 3),\ \tr A=0;\\
  &4^\circ\ [x\times y]=[x][y]-[y][x]=yx^t-xy^t.
 \end{split}
\end{equation}
All of them are easily proven by a straightforward calculation.
These four relations are everything that is necessary to prove that the matrices of the form \refE{G2matr} constitute a Lie algebra. We do it in the Appendix \ref{S:G2pr} for completeness. The same relations are the base of all subsequent calculations in this paper.
\section{A Lax operator algebra of the type $G_2$}
\label{S:LaxG2}

Let $\Sigma$ be a Riemann surface
with marked points $P_1,\ldots,P_N$, $Q_1,\ldots,Q_M$, $\ga_1,\ldots,\ga_K$. Assume every $\ga$-point to be associated with a 7-dimensional vector $\a=(0,\a_1^t,\a_2^t)$ where $\a_1,\a_2\in\C^3$ are interpreted as vector-columns, the upper $t$ denotes transposition. It may be illustrated
by the following picture.
\begin{figure}[h]\label{fig}
\begin{picture}(350,180)
\unitlength=1pt
\thicklines
%
\qbezier(100,20)(-25,70)(100,120) 
\qbezier(225,20)(350,70)(225,120) 
\qbezier(100,120)(162,138)(225,120) 
\qbezier(100,20)(162,2)(225,20) 
%
%
\qbezier(65,75)(65,45)(125,40) 
\qbezier(70,65)(100,70)(115,42) 
%
%
\qbezier(180,40)(240,45)(250,75) 
\qbezier(190,42)(210,85)(246,66) 
%
\put(50,70){\circle*{4}}
\put(50,55){\Large $P_1$}
\put(56,78){\circle*{2}}
\put(65,85){\circle*{2}}
\put(80,95){\circle*{2}}
\put(85,95){$P_N$}
%
\put(275,70){\circle*{2}}
\put(265,76){$Q_M$}
\put(271,62){\circle*{2}}
\put(262,52){\circle*{2}}
\put(245,41){\circle*{4}}
\put(227,32){\Large $Q_1$}
%
%
\put(130,70){\circle*{2}}
\put(136,78){\circle*{2}}
\put(145,85){\circle*{2}}
\put(160,95){\circle*{4}}
\put(180,105){\circle*{2}}
\put(200,112){\circle*{2}}
%
%
\thinlines
\put(160,95){\line(0,1){30}}
\put(155,125){\line(1,0){10}}
\put(155,125){\line(0,1){40}}
\put(165,125){\line(0,1){40}}
\put(155,165){\line(1,0){10}}
\put(164,89){\Large $\ga$}
\put(170,160){\Large $\a$}
\put(134,64){\Large $\ga_1$}
\put(204,106){\Large $\ga_K$}
\put(130,70){\line(0,1){30}}
\put(125,100){\line(1,0){10}}
\put(125,100){\line(0,1){40}}
\put(135,100){\line(0,1){40}}
\put(125,140){\line(1,0){10}}

\end{picture}
\caption{Tyurin data}
\end{figure}
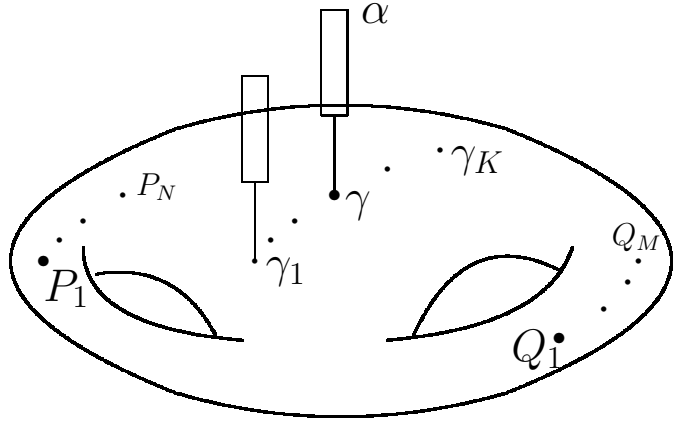
Following the lines of \cite{Klax,KSlax,Sh_DGr}, consider a $G_2$-valued
function $L$ on $\Sigma$, holomorphic outside $P_1,\ldots,P_N$, $Q_1,\ldots,Q_M$ and
$\ga_1,\ldots,\ga_K$, and having at most double poles at the
points in the last set. Assume that at every $\ga$-point $L$ possesses an expansion of the form
\begin{equation}\label{E:expan}
L(z)=\frac{L_{-2}}{z^2}+\frac{L_{-1}}{z}+L_0+L_1z+L_2z^2+\ldots\ .
\end{equation}
where $z$ is a local coordinate in the neighborhood of $\ga$ centered at $\ga$.

Let $\a_1,\a_2\in \C^3$ be fixed, $\b_1,\b_2\in \C^3$, and $\b_{01},\b_{02}\in\C$ be varying. Assume that the following orthogonality relations are fulfilled:
\begin{equation}\label{E:orth1}
\a_1^t\b_2=0, \quad \a_2^t\b_1=0, \quad \a_1^t\a_2=0.
\end{equation}

Let us take $L_0$ in the general form
\begin{equation}\label{E:L0}
L_0=\begin{pmatrix}
       0 & -\sqrt{2}a_2^t & -\sqrt{2}a_1^t \\
       \sqrt{2}a_1 & A & [a_2] \\
       \sqrt{2}a_2 & [a_1] & -A^t \\
       \end{pmatrix},
\end{equation}
and assume that
\begin{equation}\label{E:eigen}
\a_1^ta_2=0, \quad \a_2^ta_1=0, \quad A\a_1=\k_1\a_1,  \quad -A^t\a_2=\k_2\a_2.
\end{equation}
These conditions are referred to as eigenvalue conditions below, for the following reason. What is really assumed instead \refE{eigen} for classical Lie algebras, is the following eigenvalue relation: $L_0(0,\a_1^t,\a_2^t)^t=
{\tilde\k}(0,\a_1^t,\a_2^t)^t$. For $G_2$ it is equivalent to
\[
 a_1^t\a_2+a_2^t\a_1=0, \quad A\a_1+a_2\times\a_2={\tilde\k}\a_1, \quad -A^t\a_2+a_1\times\a_1={\tilde\k}\a_2.
\]
To obtain a right almost graded structure below we assume that, first, there are two eigenvalues ${\tilde\k}_1,{\tilde\k}_2$ instead one ${\tilde\k}$, and, second, that $a_1$ and $\a_2$, $a_2$ and $\a_1$ are orthogonal separately.
Then both $a_2$ and $\a_2$ are orthogonal to $\a_1$, hence $a_2\times\a_2=\l_1\a_1$, and similarly $a_1\times\a_1=\l_2\a_2$ where $\l_1,\l_2\in\C$. The last relations reduce to \refE{eigen} with $\k_1={\tilde\k}_1-\l_1$, $\k_2={\tilde\k}_2-\l_2$.

Take the residue of $L$ in the form
\begin{equation}\label{E:resid}
   L_{-1}=\begin{pmatrix}
       0 & -\sqrt{2}\b_{02}\a_2^t & -\sqrt{2}\b_{01}\a_1^t \\
       \sqrt{2}\b_{01}\a_1 & \a_1\b_2^t-\b_1\a_2^t & \b_{02}[\a_2] \\
       \sqrt{2}\b_{02}\a_2 & \b_{01}[\a_1] & \a_2\b_1^t-\b_2\a_1^t \\
       \end{pmatrix}
\end{equation}
Finally, take the $-2$ order term of the expansion in the form
\begin{equation}\label{E:L{-2}}
   L_{-2}=\mu\begin{pmatrix}
       0 & 0 & 0\\
       0 & \a_1\a_2^t &  0   \\
       0 &  0  & -\a_2\a_1^t    \\
       \end{pmatrix},\quad \mu\in\C .
\end{equation}
\begin{theorem}\label{T:Liealg}
The space of the $G_2$-valued meromorphic functions holomorphic outside $P_1,\ldots,P_N$, $Q_1,\ldots,Q_M$, $\ga_1,\ldots,\ga_K$, possessing expansions of the form
\refE{expan} at the $\ga$-points, and satisfying there the relations \refE{orth1}--\refE{L{-2}}, and the relation \refE{firstord} given below, constitute a Lie algebra with respect to the pointwise matrix commutator.
\end{theorem}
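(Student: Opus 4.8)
The plan is to reduce the statement to a purely local computation of the Laurent expansion of the commutator at each $\ga$-point. Away from the marked points both $L$ and $L'$ are holomorphic and $G_2$-valued, hence so is $[L,L']=LL'-L'L$ (because $G_2$ is a Lie algebra, the pointwise commutator of two matrices of the form \refE{G2matr} is again of that form); at the $P_i$ and $Q_j$ no constraint beyond meromorphy is imposed, and meromorphy is preserved under products and differences. Thus the whole content is to fix one $\ga$-point, with its prescribed Tyurin vector $\a=(0,\a_1^t,\a_2^t)$, and to show that if $L$ and $L'$ both expand as in \refE{expan} with $z^{-2}$-, $z^{-1}$- and $z^0$-coefficients of the prescribed forms and obeying \refE{orth1}, \refE{eigen} and \refE{firstord}, then $[L,L']$ does too, for suitable new values of the varying data.

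First I would dispose of the two most singular coefficients. Writing $L_{-2}=\mu M$ and $L'_{-2}=\mu'M$ with the common matrix
\[
 M=\begin{pmatrix} 0&0&0\\ 0&\a_1\a_2^t&0\\ 0&0&-\a_2\a_1^t\end{pmatrix},
\]
the $z^{-4}$-coefficient of $[L,L']$ equals $\mu\mu'[M,M]=0$. For the $z^{-3}$-coefficient, $\mu[M,L'_{-1}]-\mu'[M,L_{-1}]$, I would establish the annihilation identity $ML_{-1}=L_{-1}M=0$: expanding the block products and using the three relations \refE{orth1} together with the identities $[\a_i]\a_i=\a_i\times\a_i=0$ coming from $1^\circ$ of \refE{rels}, every block collapses to zero. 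Consequently both the $z^{-4}$- and the $z^{-3}$-terms vanish, so $[L,L']$ has at most a double pole, as required.

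Next I would identify the remaining singular coefficient and the constant one. The $z^{-2}$-coefficient is $\mu[M,L'_0]-\mu'[M,L_0]+[L_{-1},L'_{-1}]$, and it must again be a multiple $\mu''M$; here the eigenvalue conditions \refE{eigen} (which give $\a_2^tA=-\k_2\a_2^t$ and $A\a_1=\k_1\a_1$, and their primed analogues) control $[M,L_0]$ and $[M,L'_0]$, while $[L_{-1},L'_{-1}]$ is treated through the rank-one blocks of \refE{resid}; relations $2^\circ$--$4^\circ$ of \refE{rels} turn the emerging vector products back into multiples of $\a_1\a_2^t$ and $\a_2\a_1^t$. The $z^{-1}$-coefficient, collected from all cross products of total degree $-1$, must be brought to the shape \refE{resid} with new scalars and vectors still satisfying \refE{orth1}, and this is exactly where the first-order relation \refE{firstord} is used. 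The constant term $[L,L']_0$ automatically has the shape \refE{L0}, since $[L,L']$ is $G_2$-valued and \refE{L0} is merely the general form \refE{G2matr}; what remains is to read off its $A''$-block and check the eigenvalue conditions \refE{eigen} for it.

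The main obstacle will be this last verification of \refE{eigen} for the new constant term, and the analogous check that the new residue keeps the shape \refE{resid}. Unlike the classical matrix cases, the $G_2$ blocks couple the vector parts $a_1,a_2,\a_1,\a_2$ through the cross product, so establishing $A''\a_1=\k_1''\a_1$ and $-A''^t\a_2=\k_2''\a_2$ forces one to reexpress products such as $[a_2][\a_2]$, $[a_1]A$ and $A[a_2]$ by means of $2^\circ$ and $3^\circ$ of \refE{rels}, and to invoke the collinearities $a_2\times\a_2=\l_1\a_1$, $a_1\times\a_1=\l_2\a_2$ established after \refE{eigen}. This bookkeeping is mechanical but lengthy, which is why I would present only the reductions above in the text and relegate the remaining block-by-block identities to the Appendix.
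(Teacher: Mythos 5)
Your strategy is the same as the paper's: fix a $\ga$-point and check, Laurent coefficient by Laurent coefficient, that $[L,L']$ again satisfies the defining relations, using the identities \refE{rels} (the paper's appendix is organized exactly along your checklist for the coefficients of orders $-3$ and below, $-2$, $-1$, and $0$). Your handling of the singular part is correct, and at the order $-2$ coefficient you are in fact more careful than the paper's own write-up: you include the cross terms $\mu[M,L_0']-\mu'[M,L_0]$, which the paper's corresponding subsection tacitly omits; these terms are harmless because the eigenvalue conditions \refE{eigen} together with the collinearities $a_2\times\a_2=\l_1\a_1$, $a_1\times\a_1=\l_2\a_2$ give $[M,L_0]=-(\k_1+\k_2)M$, again a multiple of $M$.

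However, your outline has a genuine gap: it never verifies that the commutator itself satisfies \refE{firstord}. The space in the theorem is cut out by \refE{orth1}--\refE{L{-2}} \emph{together with} the first-order condition \refE{firstord}, i.e.\ $\a_2^tB\a_1=0$ where $B$ is the $(2,2)$-block of the coefficient $L_1$. You invoke \refE{firstord} only as a hypothesis (to recover the orthogonality \refE{orth1} for the new residue, exactly as the paper does), but closedness also demands that the $z^1$-coefficient of $[L,L']$ obey the same condition; otherwise $[L,L']$ need not lie in the space at all. This check is not automatic and is not subsumed by any of your steps: it requires computing
\[
  L^c_1=(L_{-2}L'_3+L_3L'_{-2})+(L_{-1}L'_2+L_2L'_{-1})+(L_0L'_1+L_1L'_0)-\leftrightarrow,
\]
so the expansion coefficients $L_2,L_3$ (which your checklist never touches) enter, and one must kill $\a_2^t(2,2)\a_1$ term by term using \refE{orth1}, \refE{eigen}, \refE{firstord} and \refE{rels}. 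The paper devotes the separate subsection \refSS{firstord} to precisely this verification; note also that \refE{firstord} is counted as an independent relation in the dimension count of \refT{almgrad}, so it cannot be expected to follow from the other conditions. Without this step your argument does not establish that the space is closed under the bracket, and the proof is incomplete.
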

\begin{proof}
We give here only a sketch of the proof. All detailes are given in the Appendix \ref{S:close}.

We must prove that if $L$, $L'$ satisfy the conditions of the theorem then their  pointwise commutator $L^c=[L,L']$ does two. The main steps of the proof are as follows.

At the first step we prove that the order of $L^c$ at any $\ga$ is not less than $-2$, i.e. the $-3$, $-4$ order terms etc. are absent (\refSS{absence}).

Second, we prove that $L_{-1}^c$, $L_{-2}^c$ have the form \refE{resid}, \refE{L{-2}} respectively, and the relations \refE{orth1} keep true with the new values of $\b_1$, $\b_2$. To prove the latter we need one more assumption \refE{firstord}. See \refSS{L{-2}}, \refSS{resid} for details.

Third, we calculate $L_0^c$ and prove the eigenvalue condition \refE{eigen} for $L^c$ (\refSS{eigen}).

Finally, we prove that the condition \refE{firstord} holds true under commutation \refSS{firstord}.
\end{proof}

\section{Almost graded structure}
\label{S:almgrad}
The almost graded structure on associative and Lie algebras had been introduced by I.M.Krichever and S.P.Novikov  in \cite{KNFa}. For the Lax operator algebras and  the two point case it had been investigated in \cite{KSlax}. Most general setup of arbitrary numbers of incoming and outgoing points, for both Krichever--Novikov and Lax operator algebras has been considered by M.Schlichenmaier \cite{SLa,SLb,Sch_Laxmulti}. Here, we will follow this most general approach.

For every $m\in\Z$ consider a divisor
\begin{equation}\label{E:Dm}
  D_m=-m\sum\limits_{i=1}^NP_i+\sum_{j=1}^M(a_jm+b_{m,j})\,Q_i+2\sum_{s=1}^K\ga_s
\end{equation}
where $a_j,b_{m,j}\in\Q$, $a_j>0$, $a_jm+b_{m,j}$ is an ascending $\Z$\ -valued function of $m$, and there exists a $B\in\R_+$ such that
$|b_{m,j}|\le B, \forall m\in\Z, j=1,\ldots,M$\label{bjmbound}.
We require that
\begin{equation}\label{E:condd}
\sum_{j=1}^Ma_j=N,\qquad
\sum_{i=j}^Mb_{m,j}=N+g-1.
\end{equation}
Let
\begin{equation}\label{E:homos}
   \L_m=\{L\in\L\ |(L)+D_m\ge 0\},
\end{equation}
where $(L)$ is the divisor of a $\g$-valued function $L$.
To be more specific of $(L)$, let us notice that by order of a
meromorphic matrix-valued function we mean the minimal order of
its entries.

We call $\L_m$ {\it the (homogeneous) subspace of degree
$m$}\index{Homogeneous element (subspace)} of the Lie algebra
$\L$.
\begin{theorem}\label{T:almgrad}
\begin{itemize}
  \item[]
  \item[$1^\circ $] $ \quad \dim\,\L_m=(\dim\,\g)N$.
  \item[$2^\circ$] $\quad \L=\bigoplus\limits_{m=-\infty}^\infty \L_m$.
  \item[$3^\circ$] $\quad [\L_k,\L_l]\subseteq\bigoplus\limits_{m=k+l}^{k+l+
g}\L_m$.
\end{itemize}
\end{theorem}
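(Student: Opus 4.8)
The plan is to follow the architecture already established for the classical series, reducing the three assertions to the Riemann--Roch theorem together with a purely local analysis at the points $P_i$, $Q_j$ and $\ga_s$; the only genuinely new ingredient is the bookkeeping forced by the $G_2$ normal forms \refE{L0}--\refE{L{-2}} and the relation \refE{firstord}. I would begin from the observation that \refE{condd} makes the degree of the divisor \refE{Dm} independent of $m$:
\[
 \deg D_m=-mN+\sum_{j=1}^M(a_jm+b_{m,j})+2K=N+g-1+2K .
\]
Since $\g=G_2$ sits linearly in the $7\times 7$ matrices, an element of $\L$ is a tuple of $\dim\g=14$ meromorphic functions subject to pointwise linear relations, so before imposing the $\ga$-conditions the space defined by \refE{homos} has dimension $(\dim\g)\,\ell(D_m)$, where $\ell(D_m)=\deg D_m-g+1=N+2K$ once the relevant divisor is non-special.

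For $1^\circ$ I would then subtract the conditions that the normal forms \refE{L{-2}}, \refE{resid}, \refE{orth1}, \refE{eigen} and \refE{firstord} impose at each $\ga_s$: they confine the Laurent coefficients $L_{-2},L_{-1},L_0,L_1$ to the prescribed families inside the $14$-dimensional adjoint space. The decisive $G_2$-computation is to check that these conditions are \emph{linearly independent} and that they total exactly $2\dim\g=28$ per point (restricting $L_{-2}$ and $L_{-1}$ accounts for $13+8$, the eigenvalue conditions \refE{eigen} for $6$, and \refE{firstord} supplies the remaining one). Granting this,
\[
 \dim\L_m=(\dim\g)(N+2K)-28K=(\dim\g)\,N ,
\]
proving $1^\circ$. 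The delicate point here is non-speciality for \emph{every} $m$ rather than only $m\gg 0$, which is where genericity of the $\ga$-points and of $\a_1,\a_2$ must be invoked.

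For $2^\circ$ I would use the evaluation map $\L_m\to\bigoplus_{i=1}^N\g$ sending $L$ to its leading Laurent coefficients (the $z_i^m$-terms) at $P_1,\dots,P_N$. Its kernel is the space of $L$ with $\ord_{P_i}L\ge m+1$, governed by the divisor $D_m-\sum_iP_i$ of degree $g-1+2K$; the same Riemann--Roch-plus-conditions count yields $(\dim\g)\cdot 2K-28K=0$, so the map is injective, hence an isomorphism by $1^\circ$. Pulling back a basis produces, for each $m$, elements with a single prescribed leading term at the $P_i$. These are independent across all $m$ (in any relation inspect the lowest-degree leading term and apply injectivity) and they span $\L$: given $L\in\L$ one peels off leading terms at the $P_i$ degree by degree, a process that terminates because the finite pole orders at the $Q_j$ bound the degree from above. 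This gives $\L=\bigoplus_m\L_m$.

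Finally, for $3^\circ$, \refT{Liealg} guarantees that $[L,L']$ again lies in $\L$ and so decomposes by $2^\circ$. For $L\in\L_k$, $L'\in\L_l$ one has $\ord_{P_i}[L,L']\ge k+l$, and since higher-degree components have strictly larger order at the $P_i$, the lowest component cannot be cancelled; the evaluation isomorphism then forces every component of degree $<k+l$ to vanish, which is the lower bound. The upper bound $k+l+g$ is the standard almost-grading defect: at the $Q_j$ the pole orders add, and a Riemann--Roch argument bounds the top surviving degree, the spillover being controlled by the index of speciality, which never exceeds $g$. Both estimates are exactly those of the Krichever--Novikov and classical Lax cases; the sole $G_2$-specific point, that the commutator preserves the normal forms at the $\ga$-points, is precisely \refT{Liealg}. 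I expect the real obstacle to be the independence-and-count of the $28$ conditions in $1^\circ$: this has no conceptual shortcut and must be extracted from the relations $1^\circ$--$4^\circ$ of \refE{rels}, which is exactly the kind of formal calculation the introduction warns about.
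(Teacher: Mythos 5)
Your proposal is correct and follows essentially the same route as the paper: for item $1^\circ$ (the only part the paper proves in detail, being the only $G_2$-specific one) you perform exactly the same computation, namely $\deg D_m=N+g-1+2K$ via \refE{condd} and Riemann--Roch, followed by the count of $2\dim\g=28$ effective conditions per $\ga$-point ($13$ from \refE{L{-2}}, $8$ from \refE{resid} after discounting the parameters constrained by \refE{orth1}, $6$ from \refE{eigen}, and $1$ from \refE{firstord}), giving $\dim\L_m=(\dim\g)(N+2K)-2K\dim\g=(\dim\g)N$. For items $2^\circ$ and $3^\circ$ the paper merely cites the Krichever--Novikov and Schlichenmaier references, and your evaluation-map and peeling arguments are precisely the standard proofs contained in those works, so there is no genuine divergence.
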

\begin{proof}
The proof of the items $2^\circ$, $3^\circ$ is many times given in the works cited in the beginning of the section. The proof of the item $1^\circ$ is the only specific for $G_2$, and will be given now.

For a generic position of incoming and outgoing points the dimension $d_m$ of the space of all meromorphic $\g$-valued functions with the divisor $D_m$ is given by the Riemann--Roch theorem:
\[
  d_m=(\dim\g)(\deg\, D_m-g+1).
\]
The $\L_m$ is a subspace of the last space distinguished with the condition $\L_m\subset\L$ which is explicitly given by the relations \refE{orth1}, \refE{eigen}, \refE{resid}, \refE{L{-2}}, \refE{firstord}.

Thus,
\[
  \dim\L_m=d_m-\sharp(\text{relations}).
\]
Observe that
\[
  \deg D_m=-mN+m\sum_{i=1}^Ma_i+\sum_{i=1}^Mb_{m,i}+2K.
\]
By \refE{condd} we obtain
\[
  \deg D_m=-mN+mN+(N+g-1)+2K=N+g-1+2K.
\]
Hence $d_m=(\dim\g)(N+2K)$.

In order the statement $1^\circ$ of the theorem was true we must prove that the number of relations is equal to $2K\cdot\dim\g$, thus there must be $2\dim\g$ relations at every of $K$ $\ga$-points.

The matrix relation \refE{resid} prescribes the form of the residue. It reduces to $\dim\g$ scalar relations with $8$ free parameters $\b_{01}$, $\b_{02}$, $\b_1$, $\b_2$. In turn, the last two $3$-dimensional parameters are subjected to the $2$ linear relations \refE{orth1}. Thus \refE{resid} gives us $\dim\g-6$ effective relations.

In a similar way \refE{L{-2}} gives $\dim\g-1$ effective relations (a free parameter $\mu$ must be taken into account).

The \refE{eigen} gives $8$ linear relations with $2$ free parameters $\k_1$, $\k_2$,  i.e. $6$ effective relations.

Finally, we have one more relation \refE{firstord}.

We end up with $2\dim\g$ relations at every $\ga$-point, as required.
\end{proof}

The \refT{almgrad} defines an {\it almost graded structure} on
$\L$.

\section{Central extensions}
\label{S:centext}
 Let us recall from \cite{KNFa,KSlax,SSlax,Sh_DGr,Sch_Laxmulti} that a two-cocycle $\ga$ on $\L$ is called local if $\exists M\in\Z_+$ such that for any $m,n\in\Z$, $|m+n|>M$, and any $L\in\L_m$, $L'\in\L_n$ we have $\ga(L,L')=0$.  Our main goal in this section is the following theorem.
\begin{theorem}\label{T:central}
\begin{itemize}
  \item[]
  \item[$1^\circ $] For any $L,L'\in\L$ the 1-form $\tr(LdL'-\w[L,L'])$ is holomorphic except at the $P$- and $Q$-points where $\w$ is a $\g$-valued 1-form on $\Sigma$ defined below.
  \item[$2^\circ$] $\ga(L,L')=\sum\limits_{i=1}^N\res_{P_i}\tr(LdL'-\w[L,L'])$ gives a local cocycle on $\L$.
  \item[$3^\circ$] The almost-graded central extension of $\L$ given by the cocycle $\ga$ is unique up to equivalence and rescaling the central element.
\end{itemize}
\end{theorem}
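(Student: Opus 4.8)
The plan is to fix the auxiliary form $\w$ first and then dispatch the three items in order, the genuinely $G_2$-specific work being concentrated in item $1^\circ$. I would take $\w$ to be a fixed $\g$-valued meromorphic $1$-form on $\Sigma$, holomorphic away from the $\ga$-points; it plays the role of a connection, since one checks directly from $\tr([X,Y]Z)=\tr(X[Y,Z])$ that $\tr(L\,dL'-\w[L,L'])=\tr(L\,\nabla L')$, where $\nabla L':=dL'+[\w,L']$. The principal part of $\w$ at each $\ga_s$ (in the local coordinate $z$) is to be prescribed from the local Tyurin data $\a_1,\a_2$, precisely so that the polar part of $\tr(\w[L,L'])$ equals that of $\tr(L\,dL')$ for every pair $L,L'$ of currents of the algebra. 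A $\w$ realizing this finite collection of principal parts exists for generic configurations of the marked points by Riemann--Roch.

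For item $1^\circ$ I would argue locally at a single $\ga$-point. Substituting the explicit forms \refE{L0}--\refE{L{-2}} for $L$ and $L'$, one computes the negative Laurent coefficients of the scalar $1$-form $\tr(L\,dL')$ and of $\tr(\w[L,L'])$ and verifies that they agree. The very top coefficient vanishes for free: it is proportional to $\tr(L_{-2}L'_{-2})=2(\a_1^t\a_2)^2$, which is zero by the orthogonality \refE{orth1}; moreover $[L,L']$ again has a pole of order at most two by \refT{Liealg}, which controls the polar part of $\tr(\w[L,L'])$. For the remaining negative coefficients I would reduce everything, via the four identities \refE{rels}, to scalar relations among the data $\a_1,\a_2,\b_1,\b_2,A,\k_1,\k_2,\mu$ of \refE{orth1}--\refE{L{-2}}, and the remaining orthogonality, the eigenvalue conditions \refE{eigen}, and the first-order relation \refE{firstord} are exactly what force these scalars to cancel. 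This term-by-term cancellation in the seven-dimensional representation is the main obstacle: conceptually it is the same phenomenon as in the classical cases, but, as already for \refT{Liealg}, it demands much heavier formal computation, and I would relegate it to the Appendix.

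Granting $1^\circ$, the $1$-form $\eta:=\tr(L\,dL'-\w[L,L'])$ is meromorphic on $\Sigma$ with poles confined to the $P$- and $Q$-points, and item $2^\circ$ follows by standard current-algebra arguments. Bilinearity is clear; antisymmetry holds because the $\w$-term is already antisymmetric in $L,L'$ while $\tr(L\,dL')+\tr(L'\,dL)=d\,\tr(LL')$ is exact, so $\ga(L,L')+\ga(L',L)=\sum_{i=1}^N\res_{P_i}d\,\tr(LL')=0$. The cocycle identity follows from the invariance $\tr([X,Y]Z)=\tr(X[Y,Z])$ of the trace form and the exactness of the cyclic sum of $\tr([L,L']\,dL'')$, while the cyclic sum of the $\w$-terms vanishes by the Jacobi identity for the matrix bracket. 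Locality is a degree count: for $L\in\L_k$, $L'\in\L_l$ the orders at the $P_i$ imposed by the divisor \refE{Dm} force $\eta$ to vanish to order $\ge 2$ at every $P_i$ once $|k+l|$ exceeds a bound depending on $g$ and the $a_j,b_{m,j}$, whence all the residues vanish; this is the same estimate underlying item $3^\circ$ of \refT{almgrad}.

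For item $3^\circ$ I would classify local $2$-cocycles up to coboundary. Using the almost-graded splitting $\L=\bigoplus_m\L_m$ of \refT{almgrad}, one shows, following the cited works of Schlichenmaier and Sheinman, that the degree-zero part of a local cocycle is, up to a coboundary, determined by a symmetric invariant bilinear form on the finite-dimensional Lie algebra $\g=G_2$, and that locality together with the almost-grading removes all freedom in the other degrees. Since $G_2$ is simple, such an invariant form is a unique scalar multiple of the trace form, so the space of local cohomology classes collapses to the line spanned by $[\ga]$. This yields uniqueness of the almost-graded central extension up to equivalence and up to rescaling the central element.
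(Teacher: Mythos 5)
Your overall route is the same as the paper's: introduce an auxiliary $\g$-valued $1$-form $\w$ with prescribed polar behaviour at the $\ga$-points, prove holomorphy of $\tr(LdL'-\w[L,L'])$ there by a local Laurent computation, obtain locality from a divisor/degree count, and quote \cite{SSlax}, \cite{Sch_Laxmulti} for uniqueness (your explicit check of antisymmetry and of the cocycle identity via exactness of $d\,\tr(LL')$ and of the cyclic sum $d\,\tr(L[L',L''])$ is a nice addition that the paper leaves implicit). However, your construction of $\w$ contains a genuine error: you require $\w$ to be holomorphic away from the $\ga$-points and assert existence ``by Riemann--Roch''. No such form exists in general. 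The residue $\w_{-1}$ at a $\ga$-point cannot vanish --- the normalization the construction needs (in the paper, \refE{orth2}: $\a_1^t\tilde\b_2=\a_2^t\tilde\b_1=1$) forces $\tilde\b_1,\tilde\b_2\neq 0$ --- while the sum of residues of a meromorphic $1$-form on a compact Riemann surface is zero; already for $K=1$ this is an outright contradiction, genericity notwithstanding. This is exactly why the paper allows $\w$ to have poles at the $P$- and $Q$-points (the integers $m_i^+,m_j^-$ in its locality estimate), and why item $1^\circ$ claims holomorphy only outside those points.

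The second, more substantive gap concerns the heart of item $1^\circ$. You prescribe the principal parts of $\w$ ``so that the polar part of $\tr(\w[L,L'])$ equals that of $\tr(L\,dL')$ for every pair $L,L'$'', but a single fixed $\w$ must serve all pairs simultaneously, and the polar part of $\tr(L\,dL')$ varies bilinearly with $(L,L')$; you give no reason why such a universal choice is even consistent. The paper's Lemmas \ref{L:LdL'} and \ref{L:Lw} supply precisely this reason: $\tr(L\,dL')$ has at most a \emph{simple} pole at each $\ga$ with $\res_\ga\tr(L\,dL')=2(\k_1+\k_2)([L,L'])$, i.e.\ the residue depends on $(L,L')$ only through the eigenvalue data of the commutator; and for every $L''\in\L$ one has $\res_\ga\tr(L''\w)=2(\k_1+\k_2)(L'')$, \emph{independently} of the auxiliary data $\tilde\b,\tilde\k$ of $\w$, thanks to the normalization \refE{orth2} together with the eigenvalue and first-order conditions imposed on $\w_0$ and $\w_1$ (the latter being \refE{firstord2}). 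Holomorphy then follows by applying the second lemma to $L''=[L,L']$, which lies in $\L$ by \refT{Liealg}. Without isolating these two structural facts (or an equivalent), relegating ``term-by-term cancellation'' to an appendix is a hope rather than a plan: it is not clear that the conditions you wish to impose on $\w$ are mutually compatible, nor that any choice yields the required identity of residues for all pairs at once.
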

The proof of the theorem is based on the following two lemmas.
\begin{lemma}\label{L:LdL'} For any $\ga$ the 1-form $\tr(LdL')$ has at most simple pole at $\ga$, and
\[
  \res_\ga\tr(LdL')=2(\k_1+\k_2)([L,L']).
\]
\end{lemma}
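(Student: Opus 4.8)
The assertion is local at one $\ga$-point, so I fix such a point and work with the Laurent expansions \refE{expan} of $L$ and $L'$ in the centred coordinate $z$. Writing $L=\sum_{k\ge-2}L_kz^k$ and $dL'=\big(\sum_{m\ge-2}mL'_mz^{m-1}\big)dz$, the coefficient of $z^{-n}$ in $\tr(L\,dL'/dz)$ is the finite sum $\sum_{k+m=1-n}m\,\tr(L_kL'_m)$. In particular
\[
\res_\ga\tr(LdL')=2\tr(L_{-2}L'_2)+\tr(L_{-1}L'_1)-\tr(L_1L'_{-1})-2\tr(L_2L'_{-2}),
\]
while the coefficients of $z^{-2},\dots,z^{-5}$ are the analogous combinations with $k+m=-1,\dots,-4$. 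The whole proof then consists in evaluating these finitely many matrix traces blockwise, using \refE{G2matr}, \refE{L0}, \refE{resid}, \refE{L{-2}} for the shapes of the blocks and \refE{rels}, \refE{orth1}, \refE{eigen}, \refE{firstord} for the contractions.

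The first half (at most a simple pole) I would organise by decreasing pole order. The key structural fact is that $L_{-2}$ in \refE{L{-2}} is built from the rank-one blocks $\a_1\a_2^t$, $\a_2\a_1^t$, which by $\a_1^t\a_2=\a_2^t\a_1=0$ from \refE{orth1} are nilpotent and mutually annihilating; combined with $[\a_i]\a_i=\a_i\times\a_i=0$ (relation $1^\circ$ of \refE{rels}) and with \refE{orth1} this forces $L_{-2}L'_{-2}=0$ and $L_{-2}L'_{-1}=L_{-1}L'_{-2}=0$, so the poles of order $5$ and $4$ disappear. For the order-three term one needs $\tr(L_{-1}L'_{-1})=0$ and $\tr(L_0L'_{-2})=0$: the first follows by expanding \refE{resid} and repeatedly invoking \refE{orth1} together with the trace identity $\tr([x][y])=-2x^ty$ implied by $2^\circ$ of \refE{rels}, while the second is the first appearance of the eigenvalue relations, $\tr(L_0L'_{-2})=2\mu'\,\a_2^tA\a_1=2\mu'\k_1\,\a_2^t\a_1=0$ by \refE{eigen}.

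The double pole is where \refE{firstord} is indispensable. A direct block computation, using that every mixed block of $L_0L'_{-1}$ produces a factor $\a_i^ta_j$, $\a_i^t\b_j$ or $\a_i^tA\b_j$ killed by \refE{eigen} and \refE{orth1}, gives $\tr(L_0L'_{-1})=0$, so the coefficient of $z^{-2}$ reduces to $2\mu\,\a_2^tC\a_1-4\mu'\,\a_2^tB\a_1$, where $B,C$ denote the traceless middle $3\times3$ blocks of $L_1$ and $L'_1$. The relation \refE{firstord} is precisely the constraint that makes the middle-block component $\a_2^t(\cdot)\a_1$ of the first-order term vanish, whence this coefficient is zero and the pole at $\ga$ is at most simple.

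Finally, for the residue itself I would reduce each of the four surviving traces to scalar contractions of $\a_1,\a_2$ against the residue- and regular-part blocks, using \refE{rels}, \refE{orth1} and \refE{firstord}. Since $[L,L']$ again lies in $\L$ by \refT{Liealg}, its own principal part and eigenvalue data are determined, and the expected outcome is that these contractions reorganise into $2(\k_1+\k_2)$ times the invariant $([L,L'])$ of the commutator. I expect this reorganisation to be the main obstacle: the factor $2(\k_1+\k_2)$ does not appear term by term --- the residue involves only the coefficients $L_{\pm1},L_{\pm2}$ and not $L_0$ --- so it must be produced through the interplay of \refE{firstord} with the eigenvalue relations \refE{eigen} and the identities \refE{rels}, and the a priori unconstrained non-principal coefficients $L_1,L'_1$ together with the second-order blocks in $\tr(L_{\pm2}L'_{\mp2})$ must be made to combine correctly. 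This $G_2$-specific bookkeeping is exactly the kind of calculation the paper defers to its appendix.
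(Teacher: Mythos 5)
Your treatment of the first half of the lemma (at most a simple pole) is complete and is essentially the paper's own argument: the order $-5$ and $-4$ matrix coefficients vanish identically by \refE{orth1} and $[\a_i]\a_i=0$; at order $-3$ one needs $\tr(L_{-1}L'_{-1})=0$ and $\tr(L_0L'_{-2})=2\mu'\a_2^tA\a_1=0$ via \refE{eigen}; and at order $-2$ the three traces $\tr(L_{-2}L'_1)$, $\tr(L_0L'_{-1})$, $\tr(L_1L'_{-2})$ vanish, the first and third precisely by \refE{firstord}. All of this agrees, step for step, with Appendix~\ref{SS:LdL'pr}.

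The genuine gap is the residue identity itself, which is the substance of the lemma: you state it as an expectation ("I expect this reorganisation to be the main obstacle") and never carry it out. The missing idea, which the paper supplies, is to treat $\k_1,\k_2$ as linear functionals and evaluate them on each bilinear piece of $[L,L']_0=(L_{-2}L'_2+L_2L'_{-2})+(L_{-1}L'_1+L_1L'_{-1})+L_0L'_0-\leftrightarrow$ separately. Three computations then resolve exactly the tension you flag (that the residue sees only $L_{\pm1},L_{\pm2}$ while $\k_{1,2}([L,L'])$ also sees $L_0$): first, the $L_0L'_0$ contributions do \emph{not} vanish individually but cancel only in the sum,
\[
  \k_1(L_0L'_0-L'_0L_0)=a_1^ta'_2-a_2^ta'_1,\qquad
  \k_2(L_0L'_0-L'_0L_0)=a_2^ta'_1-a_1^ta'_2 ;
\]
second, $\k_1(L_1L'_{-1})=\k_2(L_1L'_{-1})=0$, so the $(L_{-1}L'_1+L_1L'_{-1})$-piece contributes $(\k_1+\k_2)(L_{-1}L'_1)-\leftrightarrow$, whose double equals $\tr(L_{-1}L'_1)-\leftrightarrow$ by explicit comparison (both reduce to combinations of $\b_{01}\a_1^tb'_2$, $\b_{02}\a_2^tb'_1$, $\b_2^tB'\a_1$, $\a_2^tB'\b_1$, using \refE{firstord} and the identity $[\a_2][b'_1]=b'_1\a_2^t-(\a_2^tb'_1)E$); third, $(\k_1+\k_2)(L_{-2}L'_2+L_2L'_{-2})=2\mu\a_2^tC'\a_1$, whose double matches $\tr(2L_{-2}L'_2-2L_2L'_{-2})=4\mu\a_2^tC'\a_1-4\mu'\a_2^tC\a_1$. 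Without these three computations (or an equivalent), the formula $\res_\ga\tr(LdL')=2(\k_1+\k_2)([L,L'])$ is not established, so the proposal proves only half of the lemma.
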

The lemma is proven in \refSS{LdL'pr}.

Let $\w$\label{defomega} be a $\g$-valued one-form possessing the following expansion at $\ga$-points:
\[
  \w=\w_{-1}\frac{dz}{z}+\w_0dz+\ldots
\]
where $\w_{-1}$, $\w_0$ have the form similar to \refE{resid}, \refE{L0}, respectively:
\[
\w_{-1}=\begin{pmatrix}
       0 & -\sqrt{2}\tilde\b_{02}\a_2^t & -\sqrt{2}\tilde\b_{01}\a_1^t \\
       \sqrt{2}\tilde\b_{01}\a_1 & \a_1\tilde\b_2^t-\tilde\b_1\a_2^t & \tilde\b_{02}[\a_2] \\
       \sqrt{2}\tilde\b_{02}\a_2 & \tilde\b_{01}[\a_1] & \a_2\tilde\b_1^t-\tilde\b_2\a_1^t \\
       \end{pmatrix},\quad
\w_0=\begin{pmatrix}
       0 & -\sqrt{2}w_2^t & -\sqrt{2}w_1^t \\
       \sqrt{2}w_1 & W & [w_2] \\
       \sqrt{2}w_2 & [w_1] & -W^t \\
       \end{pmatrix}
\]
where
\begin{equation}\label{E:orth2}
\a_1^t\tilde\b_2=1, \quad \a_2^t\tilde\b_1=1,
\end{equation}
\begin{equation}\label{E:eigen2}
\a_1^tw_2=0, \quad \a_2^tw_1=0, \quad W\a_1=\tilde\k_1\a_1,  \quad -W^t\a_2=\tilde\k_2\a_2,
\end{equation}
\begin{equation}\label{E:firstord2}
  \a_2^tW_1\a_1=0
\end{equation}
where $W_1$ is a $(2,2)$-block of $\w_1$.
\begin{lemma}\label{L:Lw} For any $\ga$ the 1-form $\tr(L\w)$ has at most simple pole at $\ga$, and
\[
  \res_\ga\tr(L\w)=2(\k_1+\k_2)(L).
\]
\end{lemma}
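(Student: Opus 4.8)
The plan is to substitute the Laurent expansions of $L$ (from \refE{expan}) and of $\w$ at the point $\ga$ into $\tr(L\w)$, collect the coefficients of $z^{-3}$, $z^{-2}$ and $z^{-1}$ (these are the only possibly singular terms, since $L$ has a pole of order at most $2$ and $\w$ a pole of order at most $1$), and then show that the first two coefficients vanish while the third equals $2(\k_1+\k_2)$, with $\k_1,\k_2$ the eigenvalues of $L$ fixed by \refE{eigen}. The basic mechanism is that, for matrices in the $1+3+3$ block form \refE{G2matr}, the trace of a product is the sum of the traces of the three diagonal blocks of the product; hence every computation reduces to traces of products of $3\times3$ blocks, to be evaluated using the rank-one identity $\tr(xy^t)=y^tx$, the $G_2$-relation $2^\circ$ of \refE{rels}, and the orthogonality/eigenvalue conditions \refE{orth1}, \refE{eigen}, \refE{orth2}, \refE{eigen2}, \refE{firstord2}.

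First I would dispose of the leading singular terms. The coefficient of $z^{-3}$ is $\tr(L_{-2}\w_{-1})$; by \refE{L{-2}} and the residue form of $\w_{-1}$ it reduces to traces of the shape $\tr(\a_1\a_2^t(\a_1\tilde\b_2^t-\tilde\b_1\a_2^t))$ and $\tr(\a_2\a_1^t(\a_2\tilde\b_1^t-\tilde\b_2\a_1^t))$, and each surviving summand carries a factor $\a_1^t\a_2=\a_2^t\a_1$, which is zero by \refE{orth1}. The coefficient of $z^{-2}$ is $\tr(L_{-2}\w_0+L_{-1}\w_{-1})$. In $\tr(L_{-2}\w_0)$ only the $(2,2)$- and $(3,3)$-blocks contribute, giving $2\mu\,\a_2^tW\a_1$, which vanishes by $W\a_1=\tilde\k_1\a_1$ of \refE{eigen2} and $\a_2^t\a_1=0$. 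In $\tr(L_{-1}\w_{-1})$ every block-trace contains a factor among $\a_1^t\a_2$, $\a_1^t\b_2$, $\a_2^t\b_1$ (or their $\w$-analogues), all zero by \refE{orth1}, while the two commutator-block contributions reduce via $2^\circ$ of \refE{rels} to $\tr([\a_2][\a_1])=-2\a_2^t\a_1=0$. Thus $\tr(L\w)$ has at most a simple pole at $\ga$.

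Next I would compute the residue, namely $\tr(L_{-2}\w_1+L_{-1}\w_0+L_0\w_{-1})$. The term $\tr(L_{-2}\w_1)$ collapses to $2\mu\,\a_2^tW_1\a_1$, which vanishes by \refE{firstord2}; this is exactly the role played by that condition. The term $\tr(L_{-1}\w_0)$ vanishes block by block: the block-traces yield factors $\a_1^tw_2$, $\a_2^tw_1$ (zero by \refE{eigen2}) and $\a_1^t\b_2$, $\a_2^t\b_1$ (zero by \refE{orth1}), while $\tr((\a_1\b_2^t-\b_1\a_2^t)W)=\b_2^tW\a_1-\a_2^tW\b_1$ collapses using $W\a_1=\tilde\k_1\a_1$ and $\a_2^tW=-\tilde\k_2\a_2^t$. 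The only surviving contribution is $\tr(L_0\w_{-1})$: its $(2,2)$-block trace is $\tr(A(\a_1\tilde\b_2^t-\tilde\b_1\a_2^t))=\tilde\b_2^tA\a_1-\a_2^tA\tilde\b_1$, and since $A\a_1=\k_1\a_1$, $\a_2^tA=-\k_2\a_2^t$ by \refE{eigen}, this equals $\k_1(\a_1^t\tilde\b_2)+\k_2(\a_2^t\tilde\b_1)=\k_1+\k_2$ by the normalization \refE{orth2}. The $(3,3)$-block trace gives the same value $\k_1+\k_2$ by the analogous computation with the block $-A^t$ of $L_0$ against $\a_2\tilde\b_1^t-\tilde\b_2\a_1^t$, while the $(1,1)$-block and commutator-block terms vanish as before. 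Summing, $\res_\ga\tr(L\w)=2(\k_1+\k_2)$, as required.

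The computation is mechanical once these reductions are organized, so the obstacle is not conceptual but one of bookkeeping. The two delicate points are (i) tracking which relation annihilates each block-trace, and in particular noticing that the $\w$-eigenvalues $\tilde\k_1,\tilde\k_2$ drop out entirely while only $L$'s eigenvalues $\k_1,\k_2$ survive; and (ii) recognizing that the entire residue comes from $\tr(L_0\w_{-1})$, where the normalization $\a_1^t\tilde\b_2=\a_2^t\tilde\b_1=1$ of \refE{orth2} converts the eigenvalues into the bare factor $2$. The point requiring the most care is verifying that $\tr(L_{-2}\w_1)$ vanishes, since this is where \refE{firstord2} is indispensable and is the reason that condition is imposed on $\w$.
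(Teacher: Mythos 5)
Your proposal is correct and takes essentially the same route as the paper's own proof: a block-by-block computation of the Laurent coefficients of $\tr(L\w)$, showing the $z^{-3}$ and $z^{-2}$ coefficients vanish via \refE{orth1}, \refE{eigen2} and the identity $[\a_2][\a_1]=\a_1\a_2^t-(\a_2^t\a_1)E$, then reducing the residue to $\tr(L_0\w_{-1})=2(\k_1+\k_2)$ via \refE{eigen} and \refE{orth2}, with \refE{firstord2} indeed being exactly what kills $\tr(L_{-2}\w_1)$. The only cosmetic difference is that the paper recycles its earlier trace computations (for $L_0L'_{-2}$ and $L_{-2}L'_1$ from the proof of \refL{LdL'}) where you recompute those terms directly.
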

The lemma is proven in \refSS{Lwpr}.

\begin{proof}[Proof of the \refT{central}]
It follows from \refL{LdL'} and \refL{Lw} that $\tr(LdL')$ and $\tr(\w[L,L'])$ have at most simple poles at the $\ga$-points, and their residues are equal there. Hence  $\tr(LdL'-\w[L,L'])$ is holomorphic outside $P$- and $Q$-points. Let us denote the last 1-form by $\rho$: $\rho=\tr(LdL'-\w[L,L'])$.

Let $L\in\L_m$, $L'\in\L_{m'}$. Then by \refE{Dm}, \refE{homos}, and \refL{LdL'}
\[
  (LdL')\ge (m+m'-1)\sum\limits_{i=1}^NP_i-\sum_{j=1}^M(a_j(m+m')+b_{m,j}+b_{m',j}-1)\,Q_j+D_\ga
\]
where $D_\ga$ is a certain divisor supported at $\ga$-points.

Assume that $(\w)\ge \sum\limits_{i=1}^Nm_i^+P_i-\sum_{j=1}^Mm_j^-Q_j-\sum_{s=1}^K\ga_s$. Then
\[
  (\w[L,L'])\ge \sum\limits_{i=1}^N(m+m'+m_i^+)P_i-\sum_{j=1}^M(a_j(m+m')+b_{m,j}+b_{m',j}+m_j^-)\,Q_j+D'_\ga .
\]
where $D'_\ga$ is a certain divisor supported at $\ga$-points also.

In order $\rho=\tr(LdL'-\w[L,L'])$ had a nontrivial residue at least at one of the points $P_i$ it is necessary that
\[
  \min_{i=1,\ldots,N} \{ m+m'-1,m+m'+m_i^+\}\le -1,
\]
in other words,
\begin{equation}\label{E:upperb}
  m+m'\le -1-\min_{i=1,\ldots,N} \{ -1,m_i^+\}.
\end{equation}
On the other hand side, in order $\rho$ had a nontrivial residue at least at one of the points $Q_j$ it is necessary that
\[
  \max_{j=1,\ldots,M} \{a_j(m+m')+b_{m,j}+b_{m',j}-1 ,a_j(m+m')+b_{m,j}+b_{m',j}+m_j^-\}\ge 1.
\]
Since $b_{j,m}\le B, \forall j,m$ (see page \pageref{bjmbound}) the last inequality implies that
\[
  \max_{j=1,\ldots,M} \{a_j(m+m')+2B-1 ,a_j(m+m')+2B+m_j^-\}\ge 1,
\]
further on
\[
  \max_{j=1,\ldots,M} \{a_j(m+m')+2B-1 ,a_j(m+m')+2B+\max_{j=1,\ldots,M}m_j^-\}\ge 1,
\]
then
\[
  \max_{j=1,\ldots,M} \{a_j(m+m')\}\ge 1-\max\{  2B-1\, ,\, 2B+\! \max_{j=1,\ldots,M}m_j^- \},
\]
and finally
\begin{equation}\label{E:lowerb}
   m+m' \ge \min_{j=1,\ldots,M}\{ a_j^{-1}(1-\max\{  2B-1\, ,\, 2B+\! \max_{j=1,\ldots,M}m_j^- \})\}.
\end{equation}
By \refE{upperb} and \refE{lowerb} we conclude that $\ga(L,L')$ is a local cocycle.

The proof of uniqueness of the corresponding central extension (assertion $3^\circ$ of the theorem) has been given for an arbitrary simple Lie algebra $\g$ in \cite{SSlax} for the 2-point case ($N=1$) and in \cite{Sch_Laxmulti} for arbitrary sets of $P$- and $Q$-points.
\end{proof}
\section{Concluding remarks}\label{S:conclude}

1) The $G_2$ is the second example of a simple Lie algebra whose Lax operator algebra elements have double poles at the $\ga$-points. The first example is $\g=\spn(2n)$ \cite{KSlax}. It is also very similar to the case of $\spn(2n)$ that there is a first order relation \refE{firstord}. It is not clear what is the reason of different analytic behavior of elements of the Lax operator algebras corresponding to different types of simple Lie algebras. This behavior is not determined by the Dynkin scheme of $\g$. Indeed, $B_n$ and $C_n$ have the same Dynkin scheme but different pole orders at the $\ga$-points.

2) We did not touch any aspect of the relation between the Lax operator algebras over $G_2$ and integrable systems, though such relations certainly exist. For the classical Lie algebras, there is a duality between $\a$ and $\b$, $\ga$ and $\k$: they are dual canonical variables with respect to the Krichever--Phong symplectic form \cite{Klax,KrPhong,Sh0910_Hamilt,Sh_DGr}. For $\g=G_2$ this duality destroys: there are $2$ variables $\b_{01}$, $\b_{02}$ instead one having to be dual to $\a_0$, and $2$ variables $\k_1$, $\k_2$ having to be dual to $\ga$. We suppose that the only chance to define an analog of the Krichever--Phong symplectic structure is to do it on the subvariety $\k_1=\k_2$, $\b_{01}=\b_{02}$.  In this case the duality between $\a$ and $\b$, $\ga$ and $\k$ restores.
\appendix
\section{Proof of the closeness of $G_2$}\label{S:G2pr}
In this section we prove the closeness of $G_2$ with respect to the bracket. Let us take two arbitrary elements in $G_2$:
\[
\tilde A=\begin{pmatrix}
       0 & -\sqrt{2}a_2^t & -\sqrt{2}a_1^t \\
       \sqrt{2}a_1 & A & [a_2] \\
       \sqrt{2}a_2 & [a_1] & -A^t \\
       \end{pmatrix},\quad
       \tilde B=\begin{pmatrix}
       0 & -\sqrt{2}b_2^t & -\sqrt{2}b_1^t \\
       \sqrt{2}b_1 & B & [b_2] \\
       \sqrt{2}b_2 & [b_1] & -B^t \\
       \end{pmatrix}.
\]
Then
\[
 \begin{split}
 &[\tilde A,\tilde B]=\\
 &\left( \begin{array}{c|c|c}
       0 & \sqrt{2}(b_2^tA-a_2^tB-2(a_1\times b_1)^t)&\sqrt{2}(-b_1^tA^t+a_1^tB^t-2(a_2\times b_2)^t)\\
       \hline
       \sqrt{2}(Ab_1-Ba_1+2a_2\times b_2) &
       \begin{split}
            & AB-BA-3a_1b_2^t+3b_1a_2^t\\
           & -(b_1^ta_2)E+(b_2^ta_1)E
       \end{split} &  \begin{split}&-2a_1b_1^t+A[b_2]-[a_2]B^t \\
                                   &+2b_1a_1^t-B[a_2]+[b_2]A^t
                      \end{split}   \\ 
       \hline
       \sqrt{2}(-A^tb_2+B^ta_2+2a_1\times b_1)&\begin{split}&-2a_2b_2^t+[a_1]B-A^t[b_1]\\
                                                            &+2b_2a_2^t-[b_1]A+B^t[a_1]
                                                \end{split}
     & \begin{split}
            & A^tB^t-B^tA^t-3a_2b_1^t+3b_2a_1^t\\
           & +(b_1^ta_2)-(b_2^ta_1)E
       \end{split}   \\
       \end{array}\right).
 \end{split}
\]
To calculate the blocks $(2,2)$, $(3,3)$ we used the relation \refE{rels}$2^\circ$.

Let us check first that the blocks $(2,2)$, $(3,3)$ are traceless. For example, in the block $(2,2)$ we certainly have $\tr(AB-BA)=0$. Further on, $\tr(3a_1b_2^t)=3b_2^ta_1$, and $\tr(b_2^ta_1)E=3b_2^ta_1$ since this is  a $3\times 3$ scalar matrix. Hence $\tr(-3a_1b_2^t+(b_2^ta_1)E)=0$. Trace of the remainder of the block vanishes as well.

To complete the proof of closeness we only must check that the blocks $(3,2)$ and $(2,1)$, $(2,3)$ and $(1,2)$ are in correspondence respectively. For example, for the first pair we must prove that
\begin{equation}\label{E:check2}
  [Ab_1-Ba_1+2a_2\times b_2]=-2a_2b_2^t+[a_1]B-A^t[b_1]+2b_2a_2^t-[b_1]A+B^t[a_1].
\end{equation}
For this purpose we use the relations \refE{rels}$3^\circ$,\,$4^\circ$. By \refE{rels}$4^\circ$ $[2a_2\times b_2]=-2a_2b_2^t+2b_2a_2^t$. By \refE{rels}$^\circ$ we have $[Ab_1]=-A^t[b_1]-[b_1]A$, and $[-Ba_1]=[a_1]B+B^t[a_1]$. The \refE{check2} has been proven.
\section{Proof of the \refT{Liealg}}\label{S:close}

\subsection{Absence of order $-3$, $-4$ etc. terms}\label{SS:absence}
An order $-3$ term is equal to
\[
  L_{-3}^c=L_{-2}L'_{-1}+L_{-1}L'_{-2}-\leftrightarrow
\]
where $\leftrightarrow$ means the same expression  where the symbols with and without  $'$ are permuted. In our case $\leftrightarrow =L'_{-2}L_{-1}+L'_{-1}L_{-2}$.

\[
L_{-2}L'_{-1}=\begin{pmatrix}
       0 & 0 & 0\\
       0 & \a_1\a_2^t &  0   \\
       0 &  0  & -\a_2\a_1^t    \\
       \end{pmatrix}\begin{pmatrix}
       0 & -\sqrt{2}\b'_{02}\a_2^t & -\sqrt{2}\b'_{01}\a_1^t \\
       \sqrt{2}\b'_{01}\a_1 & \a_1{\b'_2}^t-\b'_1\a_2^t & \b'_{02}[\a_2] \\
       \sqrt{2}\b'_{02}\a_2 & \b'_{01}[\a_1] & \a_2{\b'_1}^t-\b'_2\a_1^t \\
       \end{pmatrix}.
\]
It vanishes\label{page2} by the orthogonality relations, and by $[\a_i]\a_i=\a_i\times\a_i=0$ $(i=1,2)$. Similarly $L_{-1}L'_{-2}=0$. Hence
\[
  L_{-3}^c=0.
\]
The same is true for $L_{-4}^c$ and lower terms.

\subsection{Order $-2$ term in the commutator}\label{SS:L{-2}}
Then the order $-2$ term of the expansion appears as the commutator $[L_{-1},L'_{-1}]$ where $L'$ has the same form, with all $\b$'s carrying an additional $'$. Let us calculate this commutator entry by entry, where every entry has to be first calculated for the product $L_{-1}L'_{-1}$, and then subjected to the alternation in symbols with and without $'$.

By the orthogonality conditions \refE{orth1} and relations $[\a_1]\a_1=\a_1\times\a_1=0$, $[\a_1]\a_2=\a_2\times\a_2=0$ the first column, and the first row in the product are equal to zero.

Further on, we have

$(2,2)=(-2\b_{01}\b'_{02}-\b_2^t\b'_1+\b_{02}\b'_{01})\a_1\a_2^t$ (here we used the relation $[\a_2][\a_1]=\a_1\a_2^t$).

$(3,3)=(-2\b_{02}\b'_{01}+\b_{01}\b'_{02}-\b_1^t\b'_2)\a_2\a_1^t$.

The two scalar coefficients in brackets after alternation will become opposite numbers, so that $(2,2)=\mu\a_1\a_2^t$, $(3,3)=-\mu\a_2\a_1^t$ where $\mu=2\b_{02}\b'_{01}-2\b_{01}\b'_{02}+\b_1{\b'_2}^t-\b_2^t\b'_1+\b_{02}\b'_{01}-\b_{01}\b'_{02}$.

As for the elements $(2,3)$ and $(3,2)$, they are equal to $0$. It is quite easy. Let us prove it for $(3,2)$ for example:

$(3,2)=-\sqrt{2}\b_{02}\a_2^t(\a_1{\b'_2}^t-\b'_1\a_2^t)-\sqrt{2}\b_{01}\a_1^t\b'_{01}[\a_1]$
The first summand vanishes by \refE{orth1}, and the second for the reason that \[\a_1^t[\a_1]=([\a_1]^t\a_1)^t=([-\a_1]\a_1)^t=-(\a_1\times\a_1)^t=0.
\]
Thus we obtained the following

\begin{equation}\label{E:L{-2}'}
   L_{-2}^c=[L_{-1},L'_{-1}]=\mu\begin{pmatrix}
       0 & 0 & 0\\
       0 & \a_1\a_2^t &  0   \\
       0 &  0  & -\a_2\a_1^t    \\
       \end{pmatrix}
\end{equation}
$L_{-2}$ is a coefficient in the order $-2$ term of the commutator, but the term of exactly the same form has to be added to the expansion of $L$ itself.
\subsection{Invariance of $L_{-1}$ with respect to the commutator}\label{SS:resid}
In the commutator
\[
 L_{-1}^c=L_0L'_{-1}+L_{-1}L'_0+L_1L'_{-2}+L_{-2}L'_1-\leftrightarrow .
\]
Let us calculate it term by term, entry by entry.
Take $L_0$ in the form \refE{L0}.
Then
\[
L_0L'_{-1}=\begin{pmatrix}
       0 & -\sqrt{2}a_2^t & -\sqrt{2}a_1^t\\
       \sqrt{2}a_1 & A &  [a_2]   \\
       \sqrt{2}a_2 &  [a_1]  & -A^t    \\
       \end{pmatrix}\begin{pmatrix}
           0   &   -\sqrt{2}\b'_{02}\a_2^t   &    -\sqrt{2}\b'_{01}\a_1^t \\
       \sqrt{2}\b'_{01}\a_1 & \a_1{\b'_2}^t-\b'_1\a_2^t & \b'_{02}[\a_2] \\
       \sqrt{2}\b'_{02}\a_2 & \b'_{01}[\a_1] & \a_2{\b'_1}^t-\b'_2\a_1^t \\
       \end{pmatrix}
\]
$
(1,1)=-2\b'_{01}a_2^t\a_1-2\b'_{02}a_1^t\a_2=0
$
by orthogonality relations.

\noindent
$
(2,1)=  \sqrt{2}\b'_{01}A\a_1 + \sqrt{2}\b'_{02}[a_2]\a_2 = \sqrt{2}\b'_{01}A\a_1+ \sqrt{2}\b'_{02}a_2\times\a_2 = \sqrt{2}\b'_{01}\k_1\a_1+ \sqrt{2}\b'_{02}a_2\times\a_2
$
by eigenvalue relations. We obtain
\[
  (2,1)=\sqrt{2}\left(\b'_{01}\k_1+\b'_{02}\l_1\right)\a_1.
\]
Similarly,
\[
  (3,1)=\sqrt{2}\left(\b'_{02}\k_2+\b'_{01}\l_2\right)\a_2.
\]
%

\noindent $(2,2)=-\sqrt{2}a_1\sqrt{2}\b'_{02}\a_2^t + A(\a_1{\b'_2}^t-\b'_1\a_2^t) +[a_2]\b'_{01}[\a_1]=
$\newline
$=-2\b'_{02}a_1\a_2^t + A(\a_1{\b'_2}^t-\b'_1\a_2^t) +\b'_{01}\a_1a_2^t=$\newline
$=\a_1(\k_1{\b'_2}^t+\b'_{01}a_2^t) -(2\b'_{02}a_1+A\b'_1)\a_2^t$.

We may don't calculate the remainder of entries since we know that the result belongs to $G_2$. So far we obtained the following:
\begin{equation}\label{E:L_0L'{-1}}
L_0L'_{-1}=\left(\begin{array}{c|c|c}
       0 & * &  *\\
       \hline
       \sqrt{2}\left(\b'_{01}\k_1+\b'_{02}\l_1\right)\a_1 &
       \begin{split}
            & \a_1(\k_1{\b'_2}^t+\b'_{01}a_2^t) \\
          - & (2\b'_{02}a_1+A\b'_1)\a_2^t
       \end{split} &  *   \\ 
       \hline
       \sqrt{2}\left(\b'_{02}\k_2+\b'_{01}\l_2\right)\a_2 &  *  &  *    \\
       \end{array}\right)
\end{equation}

Let us calculate now
\[
\begin{split}
&L_{-1}L'_0= \\
&=\begin{pmatrix}
           0   &   -\sqrt{2}\b_{02}\a_2^t   &    -\sqrt{2}\b_{01}\a_1^t \\
       \sqrt{2}\b_{01}\a_1 & \a_1{\b_2}^t-\b_1\a_2^t & \b_{02}[\a_2] \\
       \sqrt{2}\b_{02}\a_2 & \b_{01}[\a_1] & \a_2{\b_1}^t-\b_2\a_1^t \\
       \end{pmatrix}\begin{pmatrix}
       0 & -\sqrt{2}{a'_2}^t & -\sqrt{2}{a'_1}^t\\
       \sqrt{2}a'_1 & A' &  [a'_2]   \\
       \sqrt{2}a'_2 &  [a'_1]  & -{A'}^t    \\
       \end{pmatrix}\\
&=\left(\begin{array}{c|c|c}
       0 & * &  *\\
       \hline
       \begin{split}
         &\sqrt{2}((\a_1{\b_2}^t-\b_1\a_2^t)a'_1\\
         &+\b_{02}[\a_2]a'_2)
         \end{split} &
       \begin{split}
          -&2\b_{01}\a_1{a'_2}^t+(\a_1{\b_2}^t-\b_1\a_2^t)A'\\
          +& \b_{02}[\a_2][a'_1]
       \end{split} &  *   \\ 
       \hline
       \begin{split}
         &\sqrt{2}(\b_{01}[\a_1]a'_1\\
         &+(\a_2{\b_1}^t-\b_2\a_1^t)a'_2)
       \end{split}&  *  &  *    \\
       \end{array}\right)
\end{split}
\]
By $\a_2^ta'_1=0$, $[\a_2]a'_2=\a_2\times a'_2$ we have $(2,1)=\sqrt{2}(\a_1{\b_2}^ta'_1+\b_{02}\a_2\times a'_2)
= \sqrt{2}(\a_1{\b_2}^ta'_1-\b_{02}\l'_1\a_1)=\sqrt{2}({\b_2}^ta'_1-\b_{02}\l'_1)\a_1$.
Similarly, $(3,1)=\sqrt{2}({\b_1}^ta'_2-\b_{01}\l'_2 )\a_2$. Finally, $(2,2)=\a_1(-2\b_{01}{a'_2}^t+{\b_2}^tA')-\b_1\a_2^tA'+\b_{02}a'_1\a_2^t= \a_1(-2\b_{01}{a'_2}^t+{\b_2}^tA')+(\b_1\k'_2+\b_{02}a'_1)\a_2^t$.

Thus, we have
\[
 L_0L'_{-1}+L_{-1}L'_0=
 \left(\begin{array}{c|c|c}
       0 & * &  *\\
       \hline
        \sqrt{2}\tilde\b_{01}\a_1 & \a_1\tilde\b_2^t-\tilde\b_1\a_2^t   &  *   \\ 
       \hline
        \sqrt{2}\tilde\b_{02}\a_2 &  *  &  *    \\
       \end{array}\right)
\]
with
\[
\begin{split}
    & \tilde\b_{01}=\b'_{01}\k_1+\b'_{02}\l_1
      +\b_2^ta'_1-\b_{02}\l'_1,  \\
    & \tilde\b_{02}=\b'_{02}\k_2+\b'_{01}\l_2
      +\b_1^ta'_2-\b_{01}\l'_2, \\
    & \tilde\b_1=(2\b'_{02}a_1+A\b'_1) -(\b_1\k'_2+\b_{02}a'_1), \\
    & \tilde\b_2=(\k_1{\b'_2}+\b'_{01}a_2)+(-2\b_{01}a'_2+A'^t{\b_2}).
\end{split}
\]
We know that after alternating in symbols with and without $'$ we will get an element from $G_2$. Hence it will be a matrix of the form \refE{resid}. The only thing we must check is the orthogonality \refE{orth1} for $\tilde\b_1$, $\tilde\b_2$. Let us do it for $\a_2$ and $\tilde\b_1$ for example. Since $\a_2$ is orthogonal to $a_1$, $\b_1$, $a'_1$, we only must calculate $\a_2^tA\b'_1$. But $\a_2^tA=\k_2\a_2^t$, hence $\a_2^tA\b'_1=\k_2\a_2^t\b'_1=0$.

It is not an end of the story with the residue because we still have calculate $L_1L'_{-2}+L_{-2}L'_1-\leftrightarrow$.

Let us take $L_1$ in the form
\begin{equation}\label{E:L1}
           L_1=\begin{pmatrix}
                0 & -\sqrt{2}b_2^t & -\sqrt{2}b_1^t\\
                \sqrt{2}b_1 & B &  [b_2]   \\
                \sqrt{2}b_2 &  [b_1]  & -B^t    \\
                \end{pmatrix}.
\end{equation}
Then
\begin{equation}\label{E:L_1L'{-2}}
  \begin{split}
   L_1L'_{-2}&=\begin{pmatrix}
                0 & -\sqrt{2}b_2^t & -\sqrt{2}b_1^t\\
                \sqrt{2}b_1 & B &  [b_2]   \\
                \sqrt{2}b_2 &  [b_1]  & -B^t    \\
                \end{pmatrix}\cdot\mu'
                \begin{pmatrix}
                0 & 0 & 0\\
                0 & \a_1\a_2^t &  0   \\
                0 &  0  & -\a_2\a_1^t    \\
                \end{pmatrix}\\
            &=\mu'\left(\begin{array}{c|c|c}
       0 & -\sqrt{2}b_2^t\a_1\a_2^t &  \sqrt{2}b_1^t\a_2\a_1^t\\
       \hline
       0 & B\a_1\a_2^t   &  -[b_2]\a_2\a_1^t   \\ 
       \hline
       0 &  [b_1]\a_1\a_2^t  & B^t\a_2\a_1^t    \\
       \end{array}\right),
   \end{split}
\end{equation}
\[
  \begin{split}
   L_{-2}L'_1&= \mu\begin{pmatrix}
                0 & 0 & 0\\
                0 & \a_1\a_2^t &  0   \\
                0 &  0  & -\a_2\a_1^t    \\
                \end{pmatrix}\begin{pmatrix}
                0 & -\sqrt{2}{b'_2}^t & -\sqrt{2}{b'_1}^t\\
                \sqrt{2}b'_1 & B' &  [b'_2]   \\
                \sqrt{2}b'_2 &  [b'_1]  & -{B'}^t    \\
                \end{pmatrix}\\
            &=\mu\left(\begin{array}{c|c|c}
       0 & 0 & 0\\
       \hline
       \sqrt{2}\a_1\a_2^tb'_1 & \a_1\a_2^tB'   & \a_1\a_2^t[b'_2]    \\ 
       \hline
       -\sqrt{2}\a_2\a_1^tb'_2 & -\a_2\a_1^t[b'_1]  & \a_2\a_1^t{B'}^t   \\
       \end{array}\right)
   \end{split}
\]
\[
  \begin{split}
 &L_1L'_{-2}+L_{-2}L'_1=\\
     &=\left(\begin{array}{c|c|c}
       0 & -\sqrt{2}\mu' b_2^t\a_1\a_2^t &  \sqrt{2}\mu' b_1^t\a_2\a_1^t\\
       \hline
      \sqrt{2}\mu\a_1\a_2^tb'_1 &  \mu' B\a_1\a_2^t+\mu\a_1\a_2^tB'&-\mu'[b_2]\a_2\a_1^t+\mu\a_1\a_2^t[b'_2]   \\ 
       \hline
       -\sqrt{2}\a_2\a_1^t\mu b'_2 &  \mu' [b_1]\a_1\a_2^t-\a_2\a_1^t\mu [b'_1]  & \mu' B^t\a_2\a_1^t+ \mu\a_2\a_1^t{B'}^t    \\
       \end{array}\right).
   \end{split}
\]
After alternation we obtain
\[
  \begin{split}
 &L_1L'_{-2}+L_{-2}L'_1-\leftrightarrow\ =\\
     &=\left(\begin{array}{c|c|c}
       0 & \sqrt{2}(\mu{b'_2}^t-\mu' b_2^t)\a_1\a_2^t &  \sqrt{2}(\mu' b_1^t-\mu{b'_1}^t)\a_2\a_1^t\\
       \hline
      \sqrt{2}\a_2^t(\mu b'_1-\mu' b_1)\a_1 &  \begin{split}
                   &\a_1\a_2^t(\mu B'-\mu' B)\\-&(\mu B'-\mu' B)\a_1\a_2^t
                   \end{split}& \begin{split} -\mu'[b_2]\a_2\a_1^t+&\mu\a_1\a_2^t[b'_2]\\
                                                                  -&\leftrightarrow
                                \end{split}
                   \\ 
       \hline
       \sqrt{2}\a_1^t(\mu' b_2-\mu b'_2)\a_2 &  \begin{split}
                                                  \mu' [b_1]\a_1\a_2^t-&\a_2\a_1^t\mu [b'_1]\\-&\leftrightarrow\end{split}
        & \begin{split}
         &\a_2\a_1^t(\mu{B'}^t-\mu' B^t)\\
         &-(\mu{B'}^t-\mu' B^t)\a_2\a_1^t
         \end{split}
       \end{array}\right).
   \end{split}
\]
The last matrix is of the form \refE{resid} with
\[
  \b_1=(\mu B'-\mu' B)\a_1, \quad \b_2=(\mu{B'}^t-\mu' B^t)\a_2 .
\]
It is instructive to check correspondence between the blocks $(1,2)$ and $(2,3)$, $(2,1)$ and $(3,2)$ by a direct calculation, but we know in advance that $L_1L'_{-2}+L_{-2}L'_1-\leftrightarrow\in G_2$.

It is only necessary to check the orthogonality relations \refE{orth1}.
For this purpose we need one more assumption about the first order term of the  expansion for any $L$:
\begin{equation}\label{E:firstord}
  \a_2^tB\a_1=0.
\end{equation}
Then we have
\[
  \a_2^t\b_1=\a_2^t(\mu B'-\mu' B)\a_1=\mu\a_2^tB'\a_1-\mu'\a_2^tB\a_1=0.
\]
The second orthogonality relation is proven similarly.

For the classical simple Lie algebras there is the only case when the expansion for $L$ has an order $-2$ term, and in this case the analog of the relation \refE{firstord} was also needed. This is the case of the symplectic algebra.

\subsection{Eigenvalue conditions}\label{SS:eigen}
In this section our aim is to check the eigenvalue conditions in the form \refE{eigen} for the commutator. We have
\begin{equation}\label{E:eigen2}
 L_0^c= (L_{-2}L'_2+L_2L'_{-2})+(L_{-1}L'_1+L_1L'_{-1})+L_0L'_0\ -\leftrightarrow .
\end{equation}
Let us check \refE{eigen} for every expression in brackets separately.

Take $L_2$ in the form
\begin{equation}\label{E:L2}
  L_2=\begin{pmatrix}
                0 & -\sqrt{2}c_2^t & -\sqrt{2}c_1^t\\
                \sqrt{2}c_1 & C &  [c_2]   \\
                \sqrt{2}c_2 &  [c_1]  & -C^t    \\
                \end{pmatrix}.
\end{equation}
Then
\[
\begin{split}
  L_{-2}L'_2&=\mu\begin{pmatrix}
       0 & 0 & 0\\
       0 & \a_1\a_2^t &  0   \\
       0 &  0  & -\a_2\a_1^t    \\
       \end{pmatrix}\begin{pmatrix}
                0 & -\sqrt{2}{c'_2}^t & -\sqrt{2}{c'_1}^t\\
                \sqrt{2}c'_1 & C' &  [c'_2]   \\
                \sqrt{2}c'_2 &  [c'_1]  & -{C'}^t    \\
                \end{pmatrix}\\
                &=\mu\begin{pmatrix}
       0                        & *            & *\\
       \sqrt{2}(\a_2^tc'_1)\a_1 & \a_1\a_2^tC' &  *   \\
      -\sqrt{2}(\a_1^tc'_2)\a_2 &  *           & \a_2\a_1^tC'^t    \\
       \end{pmatrix}.
\end{split}
\]
Here only those entries have been explicitly written  which are independent in the result. All others are replaced with stars.

Similarly
\[
\begin{split}
  L_2L'_{-2}&=\mu'\begin{pmatrix}
                0 & -\sqrt{2}{c_2}^t & -\sqrt{2}{c_1}^t\\
                \sqrt{2}c_1 & C &  [c_2]   \\
                \sqrt{2}c_2 &  [c_1]  & -{C}^t    \\
                \end{pmatrix}\begin{pmatrix}
       0 & 0 & 0\\
       0 & \a_1\a_2^t &  0   \\
       0 &  0  & -\a_2\a_1^t    \\
       \end{pmatrix}\\
                &=\mu'\begin{pmatrix}
       0    & *            & *\\
       0    & C\a_1\a_2^t  &  *   \\
       0    &  *           & C^t\a_2\a_1^t
       \end{pmatrix},
\end{split}
\]
hence
\[
L_{-2}L'_2+L_2L'_{-2}=\left(\begin{array}{c|c|c}
       0                        & *            & *\\
       \hline
       \sqrt{2}\mu(\a_2^tc'_1)\a_1 & \mu\a_1\a_2^tC'+\mu' C\a_1\a_2^t &  *   \\
       \hline
      -\sqrt{2}\mu(\a_1^tc'_2)\a_2 &  *  & \mu\a_2\a_1^tC'^t+\mu'C^t\a_2\a_1^t    \\
       \end{array}\right) .
\]
Obviously, $L_{-2}L'_2+L_2L'_{-2}-\leftrightarrow$ has the form \refE{L0} with $a_1$ proportional to $\a_1$, $a_2$ proportional to $\a_2$, and $A=\mu\a_1\a_2^tC'+\mu' C\a_1\a_2^t\ - \leftrightarrow$. Hence $\a_1^ta_2=\a_2^ta_1=0$, and $A\a_1=\mu\a_1(\a_2^tC'\a_1)+\mu' C\a_1\a_2^t\a_1\ - \leftrightarrow$ which is proportional to $\a_1$. The remainder of the relations \refE{eigen}, $-A^t\a_2=\k_2\a_2$ can be proved similarly.

Further on,
\[
\begin{split}
  &L_{-1}L'_1
  =\begin{pmatrix}
       0 & -\sqrt{2}\b_{02}\a_2^t & -\sqrt{2}\b_{01}\a_1^t \\
       \sqrt{2}\b_{01}\a_1 & \a_1\b_2^t-\b_1\a_2^t & \b_{02}[\a_2] \\
       \sqrt{2}\b_{02}\a_2 & \b_{01}[\a_1] & \a_2\b_1^t-\b_2\a_1^t \\
       \end{pmatrix}\begin{pmatrix}
                0 & -\sqrt{2}{b'_2}^t & -\sqrt{2}{b'_1}^t\\
                \sqrt{2}b'_1 & B' &  [b'_2]   \\
                \sqrt{2}b'_2 &  [b'_1]  & -{B'}^t    \\
                \end{pmatrix}\\
  &=\left(\begin{array}{c|c|c}
       -2\b_{01}\a_1^tb'_2-2\b_{02}\a_2^tb'_1 & *            & *\\
       \hline
       \sqrt{2}(\a_1\b_2^t-\b_1\a_2^t)b'_1+\sqrt{2}\b_{02}[\a_2]b'_2  &
       \begin{split}
        &-2\b_{01}\a_1{b'_2}^t+\a_1\b_2^tB'\\
        &-\b_1\a_2^tB'+\b_{02}[\a_2][b'_1]
       \end{split}    &  *   \\
       \hline
          \sqrt{2}\b_{01}[\a_1]b'_1+\sqrt{2}(\a_2\b_1^t-\b_2\a_1^t)b'_2 &  *  & \begin{split}&-2\b_{02}\a_2{b'_1}^t+\b_{01}[\a_1][b'_2] \\
                       &-(\a_2\b_1^t-\b_2\a_1^t){B'}^t
          \end{split}\\
       \end{array}\right)
\end{split}
\]
Denote the entries $(2,1)$, $(3,1)$ and $(2,2)$ of the last matrix by $a_1$, $a_2$ and $A$ respectively, and prove the relations \refE{eigen} for $a_1,a_2,A$. If we succeed to do it for both $L_{-1}L'_1$ and $L_1L'_{-1}$, then these relations are true for  $L_{-1}L'_1+L_1L'_{-1}+\leftrightarrow$ since they are linear. Indeed
\[
  \a_2^ta_1=\a_2^t(\sqrt{2}(\a_1\b_2^t-\b_1\a_2^t)b'_1+\sqrt{2}\b_{02}[\a_2]b'_2).
\]
We have $\a_2^t\a_1=\a_2^t\b_1=0$, hence $\a_2^t(\a_1\b_2^t-\b_1\a_2^t)=0$. For any $x,y,z\in\C^3$ denote by $x\wedge y\wedge z$ the determinant of the $3
\times 3$-matrix constituted by those three vectors as columns. Then $\a_2^t[\a_2]b'_2=\a_2^t(\a_2\times b'_2)=\a_2\wedge\a_2\wedge b'_2=0$. We have obtained $\a_2^ta_1=0$. Similarly $\a_1^ta_2=0$.

It is the next step to prove the eigenvalue condition for the entry $(2,2)$ of the matrix. Observe that $[\a_2][b'_1]=b'_1\a_2^t-\a_2^tb'_1E$ where $E$ is the unit matrix. Let the entry $(2,2)$ operate on $\a_1$. We shall obtain
\[
  \begin{split}
 (2,2)\a_1&=(-2\b_{01}\a_1{b'_2}^t+\a_1\b_2^tB'-\b_1\a_2^tB'+\b_{02}[\a_2][b'_1])\a_1\\
          &=\a_1(-2\b_{01}{b'_2}^t\a_1+\b_2^tB'\a_1-\a_2^tb'_1)-\b_1\a_2^tB'\a_1+b'_1\a_2^t\a_1.
  \end{split}
\]
In the last line, the expression in brackets is a scalar, and the remainder of the line is equal to zero (by \refE{orth1}, \refE{firstord}). It is quite easy to prove that $(2,2)^t\a_2=0$, i.e. is also a multiple of $\a_2$ where $(2,2)^t$ denotes the $3\times 3$-matrix transposed to $(2,2)$.

Let us consider now $L_1L'_{-1}$. We have
\[
\begin{split}
  &L_1L'_{-1}=
  \begin{pmatrix}
                0 & -\sqrt{2}{b'_2}^t & -\sqrt{2}b_1^t\\
                \sqrt{2}b_1 & B &  [b_2]   \\
                \sqrt{2}b_2&  [b_1]  & -B^t    \\
                \end{pmatrix}\begin{pmatrix}
       0 & -\sqrt{2}\b'_{02}\a_2^t & -\sqrt{2}\b'_{01}\a_1^t \\
       \sqrt{2}\b'_{01}\a_1 & \a_1{\b'_2}^t-\b'_1\a_2^t & \b'_{02}[\a_2] \\
       \sqrt{2}\b'_{02}\a_2 & \b'_{01}[\a_1] & \a_2{\b'_1}^t-\b'_2\a_1^t \\
       \end{pmatrix}\\
  &=\left(\begin{array}{c|c|c}
       -2\b'_{01}\a_1^tb_2'-2\b'_{02}\a_2^tb_1'& *            & *\\
       \hline
          \sqrt{2}\b'_{01}B\a_1+\sqrt{2}\b'_{02}[b_2]\a_2   &
       \begin{split}
        & -2b_1\b'_{02}\a_2^t+B\a_1{\b'_2}^t \\
        & -B\b'_1\a_2^t+[b_2]\b'_{01}[\a_1]
       \end{split}    &  *   \\
       \hline
        \sqrt{2}\b'_{01}[b_1]\a_1-\sqrt{2}\b'_{02}B^t\a_2   &  *  &
        \begin{split}
          & -2\b'_{01}b_2\a_1^t+\b'_{02}[b_1][\a_2]  \\
          & -B^t(\a_2{\b'_1}^t-\b'_2\a_1^t)
        \end{split}    \\
       \end{array}\right).
\end{split}
\]
First, let us check the orthogonality relations starting with the element $(2,1)$. Observe that $\a_2^tB\a_1=0$ by \refE{firstord}, and  $\a_2^t[b_2]\a_2=0$ since $[b_2]$ is a skew-symmetric matrix. Hence $\a_2^t(\sqrt{2}\b'_{01}B\a_1+\sqrt{2}\b'_{02}[b_2]\a_2)=0$. Similarly $\a_1^t(\sqrt{2}\b'_{01}[b_1]\a_1-\sqrt{2}\b'_{02}B^t\a_2)=0$.

Next check the eigenvalue condition for the element $(2,2)$. We must operate on $\a_1$ by it. Observe that $\a_2^t\a_1={\b'_2}^t\a_1=0$ by \refE{orth1}, $[\a_1]\a_1=\a_1\times\a_1=0$. Hence $\a_1$ is an eigenvector of $(2,2)$ with  the eigenvalue $0$.

We may pass to the last summand in \refE{eigen2} now. We have
\[
\begin{split}
 L_0L'_0&=\begin{pmatrix}
       0 & -\sqrt{2}a_2^t & -\sqrt{2}a_1^t \\
       \sqrt{2}a_1 & A & [a_2] \\
       \sqrt{2}a_2 & [a_1] & -A^t \\
       \end{pmatrix}\begin{pmatrix}
       0 & -\sqrt{2}{a'_2}^t & -\sqrt{2}a_1'^t \\
       \sqrt{2}a'_1 & A' & [a'_2] \\
       \sqrt{2}a'_2 & [a'_1] & -{A'}^t \\
       \end{pmatrix}\\
       &=\left(\begin{array}{c|c|c}
        *                  & *            & *\\
       \hline
        \sqrt{2}Aa'_1+\sqrt{2}[a_2]a'_2      & -2a_1{a'_2}^t+AA'+[a_2][a'_1]    &  *   \\
       \hline
        \sqrt{2}[a_1]a'_1-\sqrt{2}A^ta'_2   &  *  & -2a_2a_1'^t+[a_1][a_2']+A^tA'^t  \\
       \end{array}\right).
\end{split}
\]
To prove the orthogonality relation for the entry $(2,1)$ observe first that $\a_2^tAa'_1=(A^t\a_2)^ta'_1=-\k_2\a_2^ta'_1=0$, and $\a_2^t[a_2]a'_2=\a_2^t(a_2\times a'_2)=\a_2\wedge a_2\wedge a'_2$. The last determinant is equal to $0$ because all three vectors $\a_2,a_2,a'_2$ are orthogonal to $\a_1$, hence they are linearly dependent. We have obtained that $\a_2^t(\sqrt{2}Aa'_1+\sqrt{2}[a_2]a'_2 )=0$. Similarly $\a_1^t(\sqrt{2}[a_1]a'_1-\sqrt{2}A^ta'_2)=0$.

To prove the eigenvalue relation for the entry $(2,2)$ we must operate on $\a_1$ by it. Observe first that $a'_2\a_1=0$, and second that $[a_2][a'_1]=a'_1a_2^t$, hence $[a_2][a'_1]\a_1=0$ again. For the remainder of the expression we have $AA'\a_1=\k_1\k'_1\a_1$. Similarly, the eigenvalue relation holds for $\a_2$ with respect to the transposed entry $(2,2)$.

\subsection{First order term condition}\label{SS:firstord}
In this section we prove that the relation \refE{firstord} keeps invariant under commutation. To this end, we must calculate the entry $(2,2)$ in the matrix
\[
  L^c_1=(L_{-2}L'_3+L_3L'_{-2})+(L_{-1}L'_2+L_2L'_{-1})+(L_0L'_1+L_1L'_0)-\leftrightarrow .
\]
We will keep the notation \refE{L0} for $L_0$, \refE{L1} for $L_1$, and \refE{L2} for $L_2$. Let us take $L_3$ in the form
\begin{equation}\label{E:L3}
  L_3=\begin{pmatrix}
                0 & -\sqrt{2}d_2^t & -\sqrt{2}d_1^t\\
                \sqrt{2}d_1 & D &  [d_2]   \\
                \sqrt{2}d_2 &  [d_1]  & -D^t    \\
                \end{pmatrix}.
\end{equation}
Then the entry $(2,2)$ in $L^c_1$ writes as follows:
\[
\begin{split}
  (2,2)&=(\mu\a_1\a_2^tD'+\mu'D\a_1\a_2^t)\\
       &+(-2\b_{01}{\a_1}c'_2+(\a_1\b_2^t-\b_1\a_2^t)C'+\b_{02}[\a_2][c_1])\\
       &+(-2\b_{02}c_1\a_2^t+C((\a_1{\b'_2}^t-{\b'_1}^t\a_2^t)+\b_{01}[c_2][\a_1]))\\
       &-2a_1{b'_2}^t+AB'+[a_2][b'_1]-2b_1{a'_2}^t+BA'+[b_2][a'_1].
\end{split}
\]
Having been multiplied by $\a_2^t$ from the left, and by $\a_1$ from the right, all summands of the firs line vanish by $\a_2^t\a_1=0$. The same is true for all summands of the second line, except the last one, if we additionally take account of $\a_2^t\b_1=0$.
As for the last summand, we have $[\a_2][c_1]=c_1\a_2^t-\a_2^tc_1E$, hence $\a_2^t[\a_2][c_1]\a_1=\a_2^tc_1\a_2^t\a_1-(\a_2^tc_1)\a_2^t\a_1=0$. The corresponding expression for the line three vanishes for the similar reason. In the fourth line, $\a_2^t(a_1{b'_2}^t+b_1{a'_2}^t)\a_1=0$ by the first two relations \refE{eigen}. Further on, $\a_2^tAB'\a_1=-\k_2\a_2^tB'\a_1=0$. The first equality by \refE{eigen}, and the second by \refE{firstord}. By the same argument $\a_2^tBA'\a_1=0$. For the remaining two terms we use the transformations $[a_2][b'_1]=b'_1a_2^t-(a_2^tb'_1)E$, and $[b_2][a'_1]=a'_1b_2^t-(b_2^ta'_1)E$ which again reduce the question to the first two relations \refE{eigen}.

The closeness of the space of the $L$-operators with respect to the bracket is proven.
\section{Proof of Lemmas \ref{L:LdL'}, \ref{L:Lw} }\label{S:centproof}
\subsection{Proof of \refL{LdL'}}\label{SS:LdL'pr}
\[
 \res_\ga LdL'=2L_{-2}L_2'+L_{-1}L_1'-L_1L'_{-1}-2L_2L'_{-2}.
\]
We are interested in the trace of this expression. All products are already calculated in \refSS{eigen}, in course of proving the \refT{Liealg}. We have
\[
 \begin{split}
 \tr L_{-1}L_1'=&-2\b_{01}\a_1^tb'_2-2\b_{02}\a_2^tb'_1+\\
                &\tr(-2\b_{01}\a_1{b'_2}^t+\a_1\b_2^tB'-\b_1\a_2^tB'+\b_{02}[\a_2][b'_1])+\\
                &\tr(-2\b_{02}\a_2{b'_1}^t+\b_{01}[\a_1][b'_2]-(\a_2\b_1^t-\b_2\a_1^t){B'}^t)\\
               =&-2\b_{01}\a_1^tb'_2-2\b_{02}\a_2^tb'_1+\\
                &-2\b_{01}\a_1^t{b'_2}+\b_2^tB'\a_1-\a_2^tB'\b_1+\b_{02}\tr(b'_1\a_2^t-\a_2^tb'_1E)+\\
                &-2\b_{02}\a_2^tb'_1-\a_2^t{B'}\b_1+\b_2^t{B'}\a_1+\b_{01}\tr(b'_2\a_1^t-\a_1^tb'_2E)\\
               =&-6\b_{01}\a_1^tb'_2-6\b_{02}\a_2^tb'_1+2\b_2^tB'\a_1-2\a_2^tB'\b_1.
                \end{split}
\]
By the symmetry of trace we have
\[
  \tr (L_{-1}L_1'-L_1L_{-1}')=\tr L_{-1}L_1'-\leftrightarrow .
\]
Further on,
\[
  \begin{split}\tr(2L_{-2}L'_2-2L_2L'_{-2})&=2\tr(\mu\a_1\a_2^tC'-\mu'
               C\a_1\a_2^t+\mu\a_2\a_1^tC'^t-\mu'C^t\a_2\a_1^t)\\
               &=4\mu\a_2^tC'\a_1-4\mu'\a_2^tC\a_1.
  \end{split}
\]
On the other hand side, let us calculate $\k_{1,2}([L,L'])$.
Above, we have computed
\[
\begin{split}
 L_0L'_0&=\\
       &=\left(\begin{array}{c|c|c}
        *                  & *            & *\\
       \hline
        \sqrt{2}Aa'_1+\sqrt{2}[a_2]a'_2      & -2a_1{a'_2}^t+AA'+[a_2][a'_1]    &  *   \\
       \hline
        \sqrt{2}[a_1]a'_1-\sqrt{2}A^ta'_2   &  *  & -2a_2a_1'^t+[a_1][a_2']+A^tA'^t  \\
       \end{array}\right).
\end{split}
\]
It's contribution $\k_1(L_0L'_0)$ is defined by the relation
\[
  (-2a_1{a'_2}^t+AA'+[a_2][a'_1])\a_1=\k_1(L_0L'_0)\a_1.
\]
We have
\[
  (-2a_1{a'_2}^t+AA'+[a_2][a'_1])\a_1=\k_1\k'_1\a_1+ (a'_1a_2^t-a_2^ta'_1E)\a_1.
\]
Hence
\[
  \k_1(L_0L'_0)=\k_1\k'_1-a_2^ta'_1.
\]
Similarly
\[
 \k_1(L'_0L_0)=\k'_1\k_1-a_2'^ta_1,
\]
hence
\[
  \k_1(L_0L'_0-L'_0L_0)=a_1^ta'_2-a_2^ta'_1.
\]
Further on,
\[
  (-2a_2a_1'^t+[a_1][a_2']+A^tA'^t)\a_2=(a_2'a_1^t-a_1^ta_2'E)\a_2+A^tA'^t\a_2,
\]
hence
\[
  \k_2(L_0L'_0)=-a_1^ta_2'+\k_2\k_2',\quad \k_2(L'_0L_0)=-a_2^ta_1'+\k'_2\k_2,
\]
and
\[
  \k_2(L_0L'_0-L'_0L_0)=a_2^ta_1'-a_1^ta_2'.
\]
We see that
\[
  \k_1(L_0L'_0-L'_0L_0)+\k_2(L_0L'_0-L'_0L_0)=0.
\]
Let us pass to the contribution of
\[
\begin{split}
  &L_{-1}L'_1
  =\\
  &=\left(\begin{array}{c|c|c}
       -2\b_{01}\a_1^tb'_2-2\b_{02}\a_2^tb'_1 & *            & *\\
       \hline
       \sqrt{2}(\a_1\b_2^t-\b_1\a_2^t)b'_1+\sqrt{2}\b_{02}[\a_2]b'_2  &
       \begin{split}
        &-2\b_{01}\a_1{b'_2}^t+\a_1\b_2^tB'\\
        &-\b_1\a_2^tB'+\b_{02}[\a_2][b'_1]
       \end{split}    &  *   \\
       \hline
          \sqrt{2}\b_{01}[\a_1]b'_1+\sqrt{2}(\a_2\b_1^t-\b_2\a_1^t)b'_2 &  *  & \begin{split}&-2\b_{02}\a_2{b'_1}^t+\b_{01}[\a_1][b'_2] \\
                       &-(\a_2\b_1^t-\b_2\a_1^t){B'}^t
          \end{split}\\
       \end{array}\right)
\end{split}
\]
By $\a_2^t\a_1=0$, $\k_1([\a_2][b_1'])=-\a_2^tb_1'$, $\k_2([\a_1][b_2'])=-\a_1^tb_2'$, and the relation \refE{firstord} we see that
\[
\begin{split}
  \k_1(L_{-1}L'_1)&=-2\b_{01}\a_1^tb_2'-\b_{02}\a_2^tb_1'+\b_2^tB'\a_1,\\ \k_2(L_{-1}L'_1)&=-\b_{01}\a_1^tb_2'-2\b_{02}\a_2^tb_1'-\a_2^tB'\b_1.
\end{split}
\]
It is easy to see that $\k_1(L_1L'_{-1})=\k_2(L_1L'_{-1})=0$. Hence
\[
  (\k_1+\k_2)(L_{-1}L'_1)=-3\b_{01}\a_1^tb_2'-3\b_{02}\a_2^tb_1'+\b_2^tB'\a_1-\a_2^tB'\b_1.
\]
The double of this expression coincides with the above expression for $\tr L_{-1}L_1'$.

Further on, we had
\[
L_{-2}L'_2+L_2L'_{-2}=\left(\begin{array}{c|c|c}
       0                        & *            & *\\
       \hline
       \sqrt{2}\mu(\a_2^tc'_1)\a_1 & \mu\a_1\a_2^tC'+\mu' C\a_1\a_2^t &  *   \\
       \hline
      -\sqrt{2}\mu(\a_1^tc'_2)\a_2 &  *  & \mu\a_2\a_1^tC'^t+\mu'C^t\a_2\a_1^t    \\
       \end{array}\right) .
\]
Hence
\[
  \k_1(L_{-2}L'_2+L_2L'_{-2})=\mu\a_2^tC'\a_1,\quad \k_2(L_{-2}L'_2+L_2L'_{-2})=
  \mu\a_1^tC'^t\a_2.
\]
Observe that the last two expressions are equal. Hence
\[
  (\k_1+\k_2)(L_{-2}L'_2+L_2L'_{-2})=2\mu\a_2^tC'\a_1,
\]
and
\[
  2(\k_1+\k_2)(L_{-2}L'_2+L_2L'_{-2}-\leftrightarrow)=4\mu\a_2^tC'\a_1-4\mu'\a_2^tC\a_1
\]
which coincides with the above expression for $\tr(2L_{-2}L'_2-2L_2L'_{-2})$.

We conclude that
\[
 \tr\res LdL'=2(\k_1+\k_2)([L,L']).
\]

Let us check now that $\tr(LdL')$ has at most simple poles at the $\ga$-points.

We have $(LdL')_{-5}=-2L_{-2}L'_{-2}$. This matrix is equal to $0$ by the relation $\a_1^t\a_2=0$.

The matrix $(LdL')_{-4}=-L_{-2}L'_{-1}-2L_{-1}L'_{-2}$ also vanishes as it is noticed at the page \pageref{page2}.

\[
  (LdL')_{-3}=-L_{-1}L'_{-1}-2L_0L'_{-2}.
\]
\[
\begin{split}
  L_{-1}L'_{-1}&=\begin{pmatrix}
       0 & -\sqrt{2}\b_{02}\a_2^t & -\sqrt{2}\b_{01}\a_1^t \\
       \sqrt{2}\b_{01}\a_1 & \a_1\b_2^t-\b_1\a_2^t & \b_{02}[\a_2] \\
       \sqrt{2}\b_{02}\a_2 & \b_{01}[\a_1] & \a_2\b_1^t-\b_2\a_1^t \\
       \end{pmatrix}\begin{pmatrix}
       0 & -\sqrt{2}\b'_{02}\a_2^t & -\sqrt{2}\b'_{01}\a_1^t \\
       \sqrt{2}\b'_{01}\a_1 & \a_1\b_2'^t-\b_1'\a_2^t & \b'_{02}[\a_2] \\
       \sqrt{2}\b'_{02}\a_2 & \b'_{01}[\a_1] & \a_2\b_1'^t-\b_2'\a_1^t \\
       \end{pmatrix}\\
       &=\left(\begin{array}{c|c|c}
       0        & *            & *\\
       \hline
       *        & (-2\b_{01}\b'_{02}-\b_2^t\b_1'+\b_{02}\b'_{01})\a_1\a_2^t      &  *   \\
       \hline
          *     &  *           &(-2\b_{02}\b'_{01}-\b_1^t\b_2'+\b_{01}\b'_{02})\a_2\a_1^t
       \end{array}\right).
\end{split}
\]
We have
\[
  \tr(L_{-1}L'_{-1})=0
\]
by $\a_1^t\a_2=0$.

Further on,
\[
  L_0L'_{-2}=\mu'\begin{pmatrix}
       0 & -\sqrt{2}a_2^t & -\sqrt{2}a_1^t \\
       \sqrt{2}a_1 & A & [a_2] \\
       \sqrt{2}a_2 & [a_1] & -A^t \\
       \end{pmatrix}\begin{pmatrix}
       0 & 0 & 0\\
       0 & \a_1\a_2^t &  0   \\
       0 &  0  & -\a_2\a_1^t    \\
       \end{pmatrix}=\begin{pmatrix}
       0 & 0 & 0\\
       0 & A\a_1\a_2^t  &  *   \\
       0 &  *  & A^t\a_2\a_1^t    \\
       \end{pmatrix}.
\]
By \refE{eigen} we have $A\a_1=\k_1\a_1$, $A^t\a_2=-\k_2\a_2$, hence $\tr(L_0L'_{-2})=0$\label{trL0L-2'}, and finally $\tr(LdL')_{-3}=0$.

\[
  (LdL')_{-2}=L_{-2}L_1'-L_0L'_{-1}-2L_1L'_{-2}.
\]
$L_{-2}L_1'$ has been calculated in the section \refSS{resid}. We have\label{L-2L1'}
\[
  \tr(L_{-2}L_1')=\a_2^tB'\a_1+\a_1^tB'^t\a_2=2\a_2^tB'\a_1=0
\]
(we made use of \refE{firstord} here).

The $L_0L'_{-1}$ is given by \refE{L_0L'{-1}}. Let us show that the trace of the block $(2,2)$ of that matrix is equal to zero. By \refE{L_0L'{-1}} $\tr(2,2)= (\k_1{\b'_2}^t+\b'_{01}a_2^t)\a_1 - \a_2^t(2\b'_{02}a_1+A\b'_1)$. By \refE{orth1}, \refE{eigen} ${\b'_2}^t\a_1=a_2^t\a_1=\a_2^t a_1=0$. Further on, $\a_2^tA\b'_1=(A^t\a_2)^t\b'_1=-\k_2\a_2^t\b'_1=0$. The same way, the trace of the block $(3,3)$ of $L_0L'_{-1}$ is equal to $0$. Hence $\tr L_0L'_{-1}=0$.

The $L_1L'_{-2}$ is given by \refE{L_1L'{-2}}. We have $\tr L_1L'_{-2}=\mu'(\a_2^tB\a_1+ \a_1^tB^t\a_2)=2\mu' \a_2^tB\a_1=0$ (we used \refE{firstord} here). We have $\tr(LdL')_{-2}=0$ as the result, and conclude that the 1-form $\tr LdL'$ has at most simple poles at the $\ga$-points.
\subsection{Proof of \refL{Lw}}\label{SS:Lwpr}
1) $(L\w)_{-3}=L_{-2}\w_{-1}$. We have
\[
\begin{split}
L_{-2}\w_{-1}&=\begin{pmatrix}
       0 & 0 & 0\\
       0 & \a_1\a_2^t &  0   \\
       0 &  0  & -\a_2\a_1^t    \\
       \end{pmatrix}\begin{pmatrix}
       0 & -\sqrt{2}\tilde\b_{02}\a_2^t & -\sqrt{2}\tilde\b_{01}\a_1^t \\
       \sqrt{2}\tilde\b_{01}\a_1 & \a_1\tilde\b_2^t-\tilde\b_1\a_2^t & \tilde\b_{02}[\a_2] \\
       \sqrt{2}\tilde\b_{02}\a_2 & \tilde\b_{01}[\a_1] & \a_2\tilde\b_1^t-\tilde\b_2\a_1^t \\
       \end{pmatrix}\\
       &=\begin{pmatrix}
       0        & 0            & 0\\
       0        & -(\a_2^t\tilde\b_1)\a_1\a_2^t      &  0   \\
       0        &  0           & (\a_1^t\tilde\b_2)\a_2\a_1^t
       \end{pmatrix},
\end{split}
\]
$\tr L_{-2}\w_{-1}=0$ by $\a_1^t\a_2=0$.

2) $(L\w)_{-2}=L_{-2}\w_0+L_{-1}\w_{-1}$. At page \pageref{trL0L-2'} we have shown that $\tr L_0L_{-2}'=0$. Taking account of the symmetry of trace, observe that $L_{-2}\w_0$ is a matrix of the same type, hence $\tr L_{-2}\w_0=0$.

\[
   L_{-1}\w_{-1}=\begin{pmatrix}
       0 & -\sqrt{2}\b_{02}\a_2^t & -\sqrt{2}\b_{01}\a_1^t \\
       \sqrt{2}\b_{01}\a_1 & \a_1\b_2^t-\b_1\a_2^t & \b_{02}[\a_2] \\
       \sqrt{2}\b_{02}\a_2 & \b_{01}[\a_1] & \a_2\b_1^t-\b_2\a_1^t \\
       \end{pmatrix}\begin{pmatrix}
       0 & -\sqrt{2}\tilde\b_{02}\a_2^t & -\sqrt{2}\tilde\b_{01}\a_1^t \\
       \sqrt{2}\tilde\b_{01}\a_1 & \a_1\tilde\b_2^t-\tilde\b_1\a_2^t & \tilde\b_{02}[\a_2] \\
       \sqrt{2}\tilde\b_{02}\a_2 & \tilde\b_{01}[\a_1] & \a_2\tilde\b_1^t-\tilde\b_2\a_1^t \\
       \end{pmatrix}
\]
Consider the diagonal blocks of this matrix. The entry $(1,1)$ is equal to zero.

We have
\[
\begin{split}
  (2,2)&=(\a_1\b_2^t-\b_1\a_2^t)(\a_1\tilde\b_2^t-\tilde\b_1\a_2^t)+
          \b_{02}\tilde\b_{01}[\a_2][\a_1] \\
       &=-(\b_2^t\tilde\b_1)\a_1\a_2^t+(\a_2^t\tilde\b_1)\b_1\a_2^t+\b_{02}\tilde\b_{01}\a_1\a_2^t .
\end{split}
\]
Hence $\tr (2,2)=0$ by $\a_1^t\a_2=0$, $\a_2^t\b_1=0$. Similarly $\tr (3,3)=0$, hence
\[
    \tr L_{-1}\w_{-1}=0.
\]
We conclude that $L\w$ has at most simple pole at $\ga$.

3)
\[
  \res_\ga L\w=L_{-2}\w_1+L_{-1}\w_0+L_0\w_{-1}.
\]
The matrix $L_{-2}\w_1$ is of the same type as $L_{-2}L_1'$ which is shown to be traceless at page \pageref{L-2L1'}. Hence $\tr L_{-2}\w_1=0$.

We have
\[
  L_{-1}\w_0=\begin{pmatrix}
       0 & -\sqrt{2}\b_{02}\a_2^t & -\sqrt{2}\b_{01}\a_1^t \\
       \sqrt{2}\b_{01}\a_1 & \a_1\b_2^t-\b_1\a_2^t & \b_{02}[\a_2] \\
       \sqrt{2}\b_{02}\a_2 & \b_{01}[\a_1] & \a_2\b_1^t-\b_2\a_1^t \\
       \end{pmatrix}\begin{pmatrix}
       0 & -\sqrt{2}w_2^t & -\sqrt{2}w_1^t \\
       \sqrt{2}w_1 & W & [w_2] \\
       \sqrt{2}w_2 & [w_1] & -W^t \\
       \end{pmatrix}
\]
Observe that $\a_1^tw_2=\a_2^tw_1=0$. This also implies that $\tr[\a_1][w_2]=\tr[\a_2][w_1]=0$. Indeed, $[\a_1][w_2]=w_2\a_1^t$, hence $\tr[\a_1][w_2]=\a_1^tw_2=0$.
Hence
\[
\begin{split}
  \tr(L_{-1}\w_0)&=\tr(\a_1\b_2^t-\b_1\a_2^t)W-\tr(\a_2\b_1^t-\b_2\a_1^t)W^t\\
            &=\b_2^tW\a_1-\a_2^tW\b_1-\b_1^tW^t\a_2+\a_1^tW^t\b_2\\
            &=\k_1\b_2^t\a_1+\k_2\a_2^t\b_1+\k_2\b_1^t\a_2+\k_1\a_1^t\b_2=0
\end{split}
\]
(we rely on \refE{orth1} here).

Further on
\[
L_0\w_{-1}=\begin{pmatrix}
       0 & -\sqrt{2}a_2^t & -\sqrt{2}a_1^t \\
       \sqrt{2}a_1 & A & [a_2] \\
       \sqrt{2}a_2 & [a_1] & -A^t \\
       \end{pmatrix}\begin{pmatrix}
       0 & -\sqrt{2}\tilde\b_{02}\a_2^t & -\sqrt{2}\tilde\b_{01}\a_1^t \\
       \sqrt{2}\tilde\b_{01}\a_1 & \a_1\tilde\b_2^t-\tilde\b_1\a_2^t & \tilde\b_{02}[\a_2] \\
       \sqrt{2}\tilde\b_{02}\a_2 & \tilde\b_{01}[\a_1] & \a_2\tilde\b_1^t-\tilde\b_2\a_1^t \\
       \end{pmatrix}.
\]
Hence
\[
\begin{split}
  \tr(L_0\w_{-1})&=\tr A(\a_1\tilde\b_2^t-\tilde\b_1\a_2^t)-\tr A^t(\a_2\tilde\b_1^t-\tilde\b_2\a_1^t)\\
            &=\tilde\b_2^tA\a_1-\a_2^tA\tilde\b_1-\tilde\b_1^tA^t\a_2+\a_1^tA^t\tilde\b_2\\
            &=\k_1\tilde\b_2^t\a_1+\k_2\a_2^t\tilde\b_1+\k_2\tilde\b_1^t\a_2+\k_1\a_1^t\tilde\b_2.
\end{split}
\]
By \refE{orth2} we finally obtain
\[
  \tr(L_0\w_{-1})=2(\k_1+\k_2).
\]



\end{document}